\newtheorem{Theorem}{Theorem}[section]
\newtheorem{Lemma}{Lemma}[section]
\newtheorem{Proposition}{Proposition}[section]
\newtheorem{Corollary}{Corollary}[section]
\theoremstyle{definition}
\newtheorem{Definition}{Definition}[section]
\theoremstyle{remark}
\newtheorem{Remark}{Remark}[section]
\numberwithin{equation}{section}
\renewcommand{\u}{{\bf u}}
\renewcommand{\H}{{\bf H}}
\newcommand{\R}{{\mathbb R}}
\newcommand{\Dv}{{\rm div}}
\newcommand{\tr}{{\rm tr}}
\def\f{\frac}
\def\D{\Delta }
\def\hf1{^\f{1}{1-\xi^2}}
\def\be{\begin{equation}}
\def\en{\end{equation}}
\def\bs{\begin{split}}
\def\es{\end{split}}
\newcommand{\F}{{\mathtt F}}
\begin{document}

\author{Xianpeng Hu and Fang-hua Lin}
\address{Courant Institute of Mathematical sciences, New York
University, New York, NY 10012, USA.}
\email{xianpeng@cims.nyu.edu}
\address{Courant Institute of Mathematical sciences, New York
University, New York, NY 10012, USA.}
\email{linf@cims.nyu.edu}

\title[Incompressible Magnetohydrodynamics]
{Global Existence for Two Dimensional Incompressible
Magnetohydrodynamic Flows with Zero Magnetic Diffusivity}

\keywords{Incompressible MHD, global existence,
two dimensions.}

\date{\today}
\thanks{Supported by NSF Grants.}

\begin{abstract}
The existence of global-in-time classical solutions to the Cauchy
problem of incompressible Magnetohydrodynamic flows with zero
magnetic diffusivity is considered in two dimensions. The linearization of equations is a degenerated parabolic-hyperbolic system. The solution is constructed as
a small perturbation of a constant background in critical spaces. The deformation
gradient has been introduced to decouple the subtle coupling
between the flow and the magnetic field. The $L^1$ dissipation of the velocity is obtained.
\end{abstract}

\maketitle

\section{Introduction}

Magnetohydrodynamics (MHD) studies the dynamics of electically conducting fluids. Examples of such fluids include plasmas, liquid metals, and salt water or electrolytes. The fundamental concept
behind MHD is that magnetic fields can include currents in moving conductive fluid, which in turn creates forces on the fluid and also changes the magnetic field itself. In applications,
the dynamic motion of the fluid and the magnetic
field interact strongly on each other, especially when the magnetic diffusivity is small. The hydrodynamic and
electrodynamic effects are coupled. The equations describing
two-dimensional incompressible magnetohydrodynamic flows with zero magnetic diffusivity have the following
form (\cite{Ca, KL, CD, KL1, LL, MR}):
\begin{equation} \label{e1}
\begin{cases}
\partial_t\u+\u\cdot\nabla\u-{\bf B}\cdot\nabla {\bf B}+\nabla
\left(P+\f{1}{2}|{\bf B}|^2\right)=\mu \Delta \u, \\
\partial_t {\bf B}-\nabla\times(\u\times {\bf B})=0,\quad
\Dv\u=\Dv {\bf B}=0,
\end{cases}
\end{equation}
where $\u\in\R^2$ is the velocity, ${\bf B}\in\R^2$ is the magnetic field, and $P$ is the pressure of the flow. The viscosity coefficients of
the flow are independent of the magnitude and
direction of the magnetic field and satisfy $\mu>0$ which can be deduced directly
from the second law of thermodynamics, and for now on $\mu=1$ is assumed for simplicity of presentation. Usually, we refer to the second equation
in \eqref{e1} as the induction equation, and the first equation
as the momentum balance equation. It is well-known that the
electromagnetic fields are governed by the Maxwell's equations. In
magnetohydrodynamics, the displacement current  can be neglected
(\cite{KL,LL}). As a consequence, the last equation in \eqref{e1}
is called the induction equation, and the electric field can be
written in terms of the magnetic field ${\bf B}$ and the
velocity $\u$,
\begin{equation*}
\mathfrak{E}=-\u\times {\bf B}.
\end{equation*}
Although the electric field $\mathfrak{E}$ does not appear in
\eqref{e1}, it is indeed induced according to the above relation
by the moving conductive flow in the magnetic field.

The stability of \eqref{e1} is expected physically and was observed numerically, but the rigorous mathematical verification is surprisely open since the pioneering work of Hannes Alfv$\acute{e}$n, a Nobel laureate, in 1940s.
In this paper, we are interested in global classical solutions $(\u, {\bf B})$ to \eqref{e1} which are small perturbations around an equilibrium $(0,h_0)$, where, up to a scaling and a rotation of Eulerian coordinates,
the constant vector $h_0$ is assumed to be $(1,0)^\top$ in two dimensional space $\R^2$ ($v^\top$ means the
transpose of $v$). More precisely, we define
$${\bf B}=h_0+\H=(1+\H_1, \H_2)^\top,$$ and consider the global classical solutions of
\begin{equation} \label{e2}
\begin{cases}
\partial_t\u+\u\cdot\nabla\u-\H\cdot\nabla \H-h_0\cdot\nabla\H+\nabla
\left(P+\f{1}{2}|{\bf B}|^2\right)= \Delta \u, \\
\partial_t {\bf H}-\nabla\times(\u\times h_0)=\nabla\times(\u\times {\bf H}),\quad
\Dv\u=\Dv \H=0,
\end{cases}
\end{equation}
associated with the initial condition:
\begin{equation}\label{IC1}
(\u, \H)|_{t=0}=(\u_0(x), \H_0(x)), \quad x\in\R^2.
\end{equation}
One of advantages of incorporating the magnetic pressure
$|{\bf B}|^2$ into the total pressure $P_{total}=P+\f12|{\bf B}|^2$ lies in the fact that the total pressure is indeed a quadratic term in terms of perturbations and hence it is harmless
for our analysis.

The global wellposedness of \eqref{e1} is a widely open problem due to
the strong coupling between the fluid and the magnetic field. Even though the system is in the regime of incompressible flows, 
due to the coupling between the fluid and the magnetic field, the system \eqref{e1} shares a similar linearized structure as compressible Navier-Stokes equations and viscoelastic fluids; 
and hence a global existence of classical solutions with small data as \cite{RD, HW, QZ} (and references therein) is expected. 
Indeed, the main difficulty in solving \eqref{e2}-\eqref{IC1} lies in the dissipation mechanisms of the velocity and the magnetic field, which are sharply 
different from the situation for the compressible Navier-Stokes equation.   
While the dissipation for one partial derivative of the magnetic field, $\partial_{x_1}\H$, is relatively clear, the dissipation for the other derivative $\partial_{x_2}\H$ of 
the magnetic field seems complicated and subtle (see \cite{LP}).
This intrigues us to carefully analyze the linear structure of \eqref{e2}
\begin{subequations}\label{e1a}
 \begin{align}
&\partial_t\u-\D\u-\partial_{x_1}\H=0\\
&\partial_t\H-\partial_{x_1}\u=0.
 \end{align}
\end{subequations}
Indeed, taking one more derivative, one gets
$$\partial_{tt}\u-\D\partial_t\u-\partial_{x_1}^2\u=0,$$
which is a degenerated parabolic-hyperbolic system. This differential structure shows we need to control the competition between the parabolicity and the hyperbolicity.

One of the main novelties of this work is the dissipation mechanism of
the magnetic field $\H$. The dissipation of the magnetic field is intrinsically
related to the flow. This motivates us to introduce the concept of the deformation gradient (see \cite{CD,HW, LLZ1, LLZ,LZ,QZ, ST, ST1}),
which is defined to be the gradient of the flow map with respect to the Langrangian configuration. The key observation
here is that the magnetic field is closely related to the inverse of the deformation
gradient $\F$ (see Proposition \ref{p11}). The relation between the
deformation gradient and the magnetic field can be interpreted as
a ``frozen'' law in MHD. It turns out that some combination
between the deformation gradient and the magnetic field is
transported by the flow (see Proposition \ref{p11}). A direct
consequence of this ``frozen'' law and properties of the
deformation gradient is that we can decouple the relation between
the flow and the magnetic field (see Section 2 below for details). From this viewpoint, the deformation
gradient serves like a bridge to connect the flow and the magnetic
field.

Since the deformation gradient does not explicitly appear in the system of \eqref{e1}, one still needs to control the $L^\infty$ norm of the deformation gradient in time. 
This intrigues another difficulty since the deformation gradient only satisfy a tranport equation from its definition. To overcome this difficulty,
one way is to contorl $L^\infty$ norm of $\nabla\u$. For Cauchy problems, the necessarity of $L^\infty$ norms for $\nabla\u$ motivates us to work on Besov spaces of functions, since $\dot{B}_{2,1}^1(\R^2)\subset L^\infty.$  
Motivated by the work \cite{RD, HW, QZ}, it is natural to decompose the phase space for the magnetic field as suggested by the spectral analysis in Section 4. 
This consideration naturally involves the so-called \textit{hybrid Besov space} $\tilde{B}^{s,t}$ (see Definition in Section 3).  
In summary, if one is interested in the global existence of \eqref{e1} with as low as possible regularity, it seems necessary to work in the framework of Besov spaces of functions because:
\begin{itemize}
\item The system \eqref{e1} is scaling invariant in Besov spaces $(\u,{\bf B})\in \dot{B}^0_{2,1}\times\dot{B}_{2,1}^{1}$;
\item Due to the weak dissipation mechanism for the magnetic field in the direction parallel to the background, 
the $L^1$ integrability of $\|\nabla\u\|_{L^\infty}$ is necessary and which is generally obtained by using Besov spaces for Cauchy problems.
\end{itemize}

For the incompressible version of \eqref{e1}, authors in \cite{XZ, XZ1}
proposed a global existence near the equilibrium $(\u,\H)=(0,h_0)$
in Lagrangian coordinates using techniques from anisotropic Besov spaces with negative indicies. As
the viscosity $\mu$ vanishes further, system
\eqref{e1} becomes two dimensional ideal incompressible MHD.
Authors in \cite{MZ, ZF} showed a local existence for the ideal
incompressible MHD (see also \cite{CG, Yu} for problems of
current-vortex sheets). As far as the criteria for blow-up in
finite time for system \eqref{e1} is concerned, we refer the
interested reader to \cite{LMZ}. For the MHD with partial
dissipation for the magnetic field, authors in \cite{CW} showed a
global existence in two dimension without the smallness
assumption. 

This paper is organized as follows. In Section 2, we will state
our main result and explain the strategy of proof, including the
dissipation mechanism for perturbations. In Section 3, Besov spaces of funcitions, including the hybird Besov spaces will be defined and some properties of those
spaces will be listed. Section 4 is devoted
to dissipation estimates for the velocity $\u$ and $\H$, while we will
finish the proof of our main result in Section 5. 


\bigskip

\section{Main Results}

In this section, we state our main result and explain the strategy
of proof. For this purpose, we first take a look at the linearized
structure of the momentum equation in \eqref{e1}. Indeed, using
the continuity equation, the second equation in \eqref{e2} can be
rewritten as
\begin{equation}\label{11}
 \partial_t\u-h_0\cdot\nabla\H-\mu\D\u+\nabla\left(P+\f12|{\bf B}|^2\right)=-\u\cdot\nabla\u+\H\cdot\nabla\H.
\end{equation}
Define
$$\Lambda^s=\mathcal{F}^{-1}(|\xi|^s\mathcal{F}(f)),$$
where $\mathcal{F}$ denotes the Fourier transformation, and
denote
$$\omega=\Lambda^{-1}\textrm{curl}\u$$
with $\textrm{curl}\u=\partial_{x_2}\u_1-\partial_{x_1}\u_2$. 
Applying the operator $\Lambda^{-1}\textrm{curl}$ to \eqref{11} yields
\begin{equation}\label{13}
\partial_t\omega-\mu\D\omega-\Lambda\H_2=\Lambda^{-1}\textrm{curl}(\H\cdot\nabla\H-\u\cdot\nabla\u).
\end{equation}
Here for \eqref{13}, we used $\Dv\H=0$ and
\begin{equation*}
 \begin{split}
\Lambda^{-1}\textrm{curl}(h_0\cdot\nabla\H)&=\Lambda^{-1}\left(\f{\partial^2 \H_1}{\partial x_1x_2}-\f{\partial^2\H_2}{\partial x_1^2}\right)\\
&=-\Lambda^{-1}\left(\f{\partial^2\H_2}{\partial x_1^2}+\f{\partial^2 \H_2}{\partial x_2^2}\right)\\
&=\Lambda\H_2.
 \end{split}
\end{equation*}
Clearly \eqref{13} implies the dissipation of $\H_2$ or the dissipation of $\partial_{x_1}\H$ due to the incompressibility $\Dv\H=0$. 

\subsection{Dissipation of $\H$}
Let us define the flow map $x(t,\alpha)$
associated to the velocity $\u$ as
\begin{equation}\label{lang}
\f{dx(t,\alpha)}{dt}=\u(t, x(t,\alpha))\quad
\textrm{with}\quad x(0)=\alpha.
\end{equation}
We introduce the deformation gradient $\F\in M^{2\times 2}$
($M^{2\times 2}$ denotes the set of all $2\times 2$ matrices with
positive determinants) as (see \cite{CD, LLZ1, LLZ, LZ} and references therein)
$$\F(t,x(t,\alpha))=\f{\partial x(t,\alpha)}{\partial\alpha}.$$
From the chain rule, it follows that $\F$ satisfies a transport
equation in the Eulerian coordinate
\begin{equation}\label{f}
\partial_t\F+\u\cdot\nabla\F=\nabla\u\F.
\end{equation}
Denote by $A$ and $J$ the inverse and the determinant of $\F=\nabla_\alpha x$ respectively; that is
$$A=\F^{-1}\quad\textrm{and}\quad J=\det\F.$$
Since $A\F=I$, differentiating $A$ gives
\begin{equation}\label{17a}
 \f{D}{Dt} A=-A\nabla_\alpha \u A\quad\textrm{and}\quad \partial_{\alpha_i}A=-A\nabla_\alpha\partial_{\alpha_i}x A,
\end{equation}
where $\f{D}{Dt}$ stands for the material derivative.
Differentiating $J$ gives
\begin{equation}\label{17b}
 \f{D}{Dt}J=J\tr(A\nabla_\alpha\u)\quad\textrm{and}\quad \partial_{\alpha_i}J=J\tr(A\nabla_\alpha\partial_{\alpha_i}x).
\end{equation}

The divergence free condition $\Dv\u=0$, together with \eqref{17b}, yields
\begin{equation*}
J=J_0.
\end{equation*}
For simplicity of the presentation, we assume from now on that
\begin{equation}\label{19}
J_0=1, \quad\textrm{and hence} \quad J=1 \quad\textrm{for all time}. 
\end{equation}

The magnetic field is incorporated into the flow through the deformation gradient as follows.
\begin{Proposition}\label{p11}
Assume that $(\u, \F)$ satisfies the system \eqref{e1} and the equation \eqref{f}. Then one has the relation
\begin{equation}\label{bf}
A{\bf B}(t)=A_0{\bf B_0}\quad \textrm{for all}\quad t\ge 0.
\end{equation}
\end{Proposition}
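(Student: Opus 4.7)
The plan is to show that the combination $A\mathbf{B}$ is transported by the flow, i.e., its material derivative vanishes, and then invoke the initial condition to conclude $(A\mathbf{B})(t, x(t,\alpha)) = (A_0\mathbf{B}_0)(\alpha)$ for all $t \geq 0$.

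First, I would rewrite the induction equation in \eqref{e1} in a more usable form. Expanding the curl of a cross product via the standard vector identity and using both incompressibility conditions $\Dv \u = \Dv \mathbf{B} = 0$, I get
\begin{equation*}
\nabla\times(\u\times\mathbf{B}) = (\mathbf{B}\cdot\nabla)\u - (\u\cdot\nabla)\mathbf{B},
\end{equation*}
so the induction equation becomes the transport-type equation
\begin{equation*}
\tfrac{D}{Dt}\mathbf{B} = (\mathbf{B}\cdot\nabla)\u,
\end{equation*}
where $D/Dt = \partial_t + \u\cdot\nabla$.

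Next, I would collect the evolution equation for $A$. From $A\F = I$ and $\tfrac{D}{Dt}\F = \nabla\u\,\F$ (the Eulerian form of \eqref{f}), one obtains
\begin{equation*}
\tfrac{D}{Dt}A = -A\,\nabla\u,
\end{equation*}
which is the Eulerian reformulation of \eqref{17a} after noting $\nabla_\alpha\u = \nabla\u\,\F$ and $\F A = I$.

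The core step is a direct computation: combining these two facts,
\begin{equation*}
\tfrac{D}{Dt}(A\mathbf{B}) = \bigl(\tfrac{D}{Dt}A\bigr)\mathbf{B} + A\,\tfrac{D}{Dt}\mathbf{B} = -A\,\nabla\u\,\mathbf{B} + A\,(\mathbf{B}\cdot\nabla)\u.
\end{equation*}
Writing this in index notation, the $i$-th component equals $-A_{ik}(\partial_j u_k)B_j + A_{ij}B_k\partial_k u_j$; after relabeling dummy indices in the second term ($j \leftrightarrow k$) the two terms cancel identically. Hence $\tfrac{D}{Dt}(A\mathbf{B}) = 0$.

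Finally, integrating along the characteristics of \eqref{lang} and using the initial data $A(0,\alpha) = A_0(\alpha)$, $\mathbf{B}(0,\alpha) = \mathbf{B}_0(\alpha)$, yields the identity $(A\mathbf{B})(t, x(t,\alpha)) = (A_0\mathbf{B}_0)(\alpha)$, which is precisely \eqref{bf}. There is no serious obstacle here: the only point requiring care is the index bookkeeping in the cancellation, together with the fact that $(\mathbf{B}\cdot\nabla)\u$ must be interpreted as the vector with components $B_k\partial_k u_j$ (not as $\nabla\u\,\mathbf{B}$) so that the two contributions indeed cancel rather than add.
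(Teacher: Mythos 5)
Your proposal is correct and follows essentially the same route as the paper: rewrite the induction equation as the ``frozen'' law $\partial_t{\bf B}+\u\cdot\nabla{\bf B}={\bf B}\cdot\nabla\u$, pair it with the Eulerian equation $\partial_t A+\u\cdot\nabla A+A\nabla\u=0$, deduce that $A{\bf B}$ satisfies a pure transport equation, and conclude along characteristics. Your explicit index-cancellation check is just a spelled-out version of the step the paper leaves implicit.
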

\begin{proof}
In Eulerian coordinates, \eqref{17a} is interpreted as
\begin{equation}\label{17d}
 \partial_t A+\u\cdot\nabla A+A\nabla\u=0.
\end{equation}
On the other hand, using $\Dv\u=\Dv\H=0$, the second equation of \eqref{e1} can be rewritten as
\begin{equation}\label{17e}
 \partial_t{\bf B}+\u\cdot\nabla{\bf B}={\bf B}\cdot\nabla\u,
\end{equation}
which is refered as a ``frozen'' law of MHD in literatures (for example \cite{Ca,KL}).
Therefore one deduces from \eqref{17d} and \eqref{17e} that
$$\partial_t\Big(A{\bf B}\Big)+\u\cdot\nabla\Big(A{\bf B}\Big)=0,$$
and the desired identity \eqref{bf} follows.
\end{proof}

A direct consequence of Proposition \ref{p11} is that along the flow map, the quantity $A{\bf B}$ is a constant. From now on, we assume that
\begin{equation}\label{A1}
A_0{\bf B_0}=h_0.
\end{equation}
Therefore, it follows from Proposition \ref{p11} that for all $t>0$,
\begin{equation}\label{A11}
 A{\bf B}(t)=h_0.
\end{equation}
Keeping in mind that $A\F=I$ and \eqref{19}, one has
$$A_{ij}=\f{\partial \alpha_i}{\partial x_j}\quad \textrm{and}\quad \F=A^{-1}=\left[\begin{array}{ccc} A_{22},\quad -A_{12}\\ -A_{21},\quad A_{11}\end{array}\right]$$
Multiplying the identity \eqref{A11} by $\F$ implies
\begin{equation}\label{16}
\begin{split}
{\bf{B}}(t)=\F(x,t)h_0=\left(\begin{array}{ccc}A_{22}\\-A_{21}\end{array}\right).
\end{split}
\end{equation}
Introduce the perturbation of $A$ as
$$\mathcal{A}=A-I.$$
Components of \eqref{16} gives
\begin{equation}\label{16a}
\H_1=\mathcal{A}_{22}\quad\textrm{and}\quad \H_2=-\mathcal{A}_{21}. 
\end{equation}

Let us now explain the idea to obtain the dissipation for the magnetic field. Indeed the incompressibility $\Dv\H=0$ and the disspation of $\H_2$ imply the dissipation for $\partial_{x_1}\H$. On the other hand,
it holds, using \eqref{16a} and integration by parts
\begin{equation}\label{17}
 \begin{split}
(\partial_{x_1}\H|\Dv \mathcal{A})&=\sum_{j=1}^2\left(\f{\partial^2\alpha_2}{\partial x_1\partial x_2}|\f{\partial^2 \alpha_1}{\partial x_j^2}\right)
-\sum_{j=1}^2\left(\f{\partial^2\alpha_2}{\partial x_1^2}|\f{\partial^2 \alpha_2}{\partial x_j^2}\right)\\
&=-\sum_{j=1}^2\left\|\f{\partial^2\alpha_2}{\partial x_2\partial x_j}\right\|_{L^2}^2-\sum_{j=1}^2\left\|\f{\partial^2\alpha_2}{\partial x_1\partial_{x_j}}\right\|_{L^2}^2
+\sum_{i,j=1}^2\left(\f{\partial^2\alpha_i}{\partial x_i\partial x_j}|\f{\partial^2 \alpha_2}{\partial x_2\partial x_j}\right)\\
&=-\|\nabla\H\|_{L^2}^2+\sum_{i,j=1}^2\left(\f{\partial^2\alpha_i}{\partial x_i\partial x_j}|\f{\partial^2 \alpha_2}{\partial x_2\partial x_j}\right).
 \end{split}
\end{equation}
To control the last term above, we note that the identity \eqref{19} implies $1=\det A$ and hence
$$0=\tr\mathcal{A}+\det\mathcal{A}\quad\textrm{or equivalently}\quad \tr\mathcal{A}=-\det\mathcal{A}.$$
Thus one has
\begin{equation*}
 \begin{split}
\sum_{i,j=1}^2\left(\f{\partial^2\alpha_i}{\partial x_i\partial x_j}|\f{\partial^2 \alpha_2}{\partial x_2\partial x_j}\right)=
-\sum_{j=1}^2\left(\f{\partial\det\mathcal{A}}{\partial x_j}|\f{\partial^2 \alpha_2}{\partial x_2\partial x_j}\right).
 \end{split}
\end{equation*}
Thus one deduces from \eqref{17} that
\begin{equation}\label{18}
 \begin{split}
\|\nabla\H\|_{L^2}^2+\left\|\nabla\H_1\right\|_{L^2}^2&=-(\partial_{x_1}\H|\Dv \mathcal{A})-\sum_{j=1}^2\left(\f{\partial\det\mathcal{A}}{\partial x_j}|\f{\partial^2 \alpha_2}{\partial x_2\partial x_j}\right).
 \end{split}
\end{equation}
All terms in the right hand side contain at least one term with $L^1$ dissipation in time, provided that one can obtain the dissipation for $\det\mathcal{A}$. This in turns gives a $L^2$ dissipation in time for $\H$, and hence
a $L^1$ dissipation for nonlinear terms such as $|\H_1|^2$.

\begin{Remark} 
The reason why we prefer to using $A$ instead of the deformation gradient $\F$ itself lies in the fact that $A$ is actually a gradient in Eulerian coordinates, and hence taking spatial derivatives of $A$ will not involve 
change of variables.  
\end{Remark}

\subsection{Main Results}

Let us now introduce the functional space which appears in the
global existence theorem.

\begin{Definition}
For $T>0$, and $s\in \R$, we denote
\begin{equation*}
\begin{split}
\mathfrak{B}^s_T&=\Big\{(f, g)\in  \left( L^1(0,T; \hat{B}^{s+1})\cap
C([0,T];\hat{B}^{s-1})\right)\\&\qquad\qquad\qquad\times \left(L^2(0,T;
\hat{B}^{s})\cap C([0,T];\tilde{B}^{s-1,s})\right)
\end{split}
\end{equation*}
and
\begin{equation*}
\begin{split}
\|(f,g)\|_{\mathfrak{B}^s_T}&=\|f\|_{L^\infty_T(\hat{B}^{s-1})}+\|g\|_{L^\infty_T(\tilde{B}^{s-1,s})}+\|f\|_{L^1_T(\hat{B}^{s+1})}+\|g\|_{L^2_T(\hat{B}^{s})},
\end{split}
\end{equation*}
where the Besov space $\hat{B}^s$ and the \textit{hybrid Besov space} $\tilde{B}^{s,t}$ will be defined in Section 3.
We use the notation $\mathfrak{B}^s$ if $T=+\infty$ by changing
the interval $[0,T]$ into $[0,\infty)$ in the definition above.
\end{Definition}

Now we are ready to state our main theorem.
\begin{Theorem}\label{MT}
There exist two positive constants $\gamma$ and $\Gamma$,
such that, if $\u_0\in
\hat{B}^{0}$, ${\bf B}_0-h_0\in\tilde{B}^{0,1}$ and $\mathcal{A}_0\in\hat{B}^{1}$ satisfy \eqref{A1} and
$$\|\u_0\|_{\hat{B}^{0}}+\|{\bf B}-h_0\|_{\tilde{B}^{0,1}}+\|\mathcal{A}_0\|_{\hat{B}^{1}}\le
\gamma$$ for a sufficiently small $\gamma$,
then Cauchy problem \eqref{e1} with initial data \eqref{IC1} has a unique global solution $(\u, {\bf B})$ with $(\u,\H)\in \mathfrak{B}^{1}$.
Moreover, the solution satisfies the following estimate
\begin{equation}\label{ES}
\|(\u,\H)\|_{\mathfrak{B}^{1}}+\|\mathcal{A}\|_{L^\infty(\hat{B}^{1})}\le
\Gamma\left(\|\u_0\|_{\hat{B}^{0}}+\|{\bf B}-h_0\|_{\tilde{B}^{0,1}}+\|\mathcal{A}_0\|_{\hat{B}^{1}}\right).
\end{equation}
\end{Theorem}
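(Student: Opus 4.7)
The plan is to run a bootstrap/continuation argument in the space $\mathfrak{B}^1_T$: construct approximate solutions by a Friedrichs scheme applied to the linearised system together with the transport equation \eqref{17d} for $\mathcal{A}$, derive uniform bounds from the dissipation estimates produced in Section 4, close a nonlinear estimate whose smallness is inherited from that of $\gamma$, and pass to the limit. The frozen-law identity \eqref{A11} together with \eqref{16a} guarantees that at every step of the iteration $\H$ is determined from $\mathcal{A}$, so \eqref{e1}--\eqref{IC1} is really a coupled $(\u,\H,\mathcal{A})$ system admitting a self-contained energy inequality.

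First I would freeze the linear block \eqref{e1a} and apply its frequency-localised estimate block-by-block via the Littlewood--Paley projectors. The spectral analysis of Section 4 will give parabolic smoothing on $\u$ at high frequency but only hyperbolic propagation at low frequency; symmetrically, $\H_2$ and hence $\partial_{x_1}\H$ acquires genuine dissipation from the coupling, while $\partial_{x_2}\H$ only gains a hybrid $L^2_t$ control losing one derivative in the low-frequency range. This anisotropy is precisely what the hybrid space $\tilde{B}^{s-1,s}$ is designed to record; summing the dyadic estimates yields a control of $\|(\u,\H)\|_{\mathfrak{B}^1_T}$ in terms of the initial data plus suitable $L^1_T$/$L^2_T$ norms of the source terms.

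The sources are the quadratic products $\u\cdot\nabla\u$, $\H\cdot\nabla\H$, and $\nabla\times(\u\times\H)$. I would estimate them with Bony paraproduct/remainder decomposition in Besov spaces, combined with the two-dimensional endpoint embedding $\dot{B}^1_{2,1}\hookrightarrow L^\infty$, so each is bounded by $\|(\u,\H)\|_{\mathfrak{B}^1_T}^2$. The subtle point is the full $L^2_T \hat{B}^1$ norm of $\nabla\H$, which the linear step delivers only in the $\partial_{x_1}$ direction. Identity \eqref{18} combined with $\det A=1$ (equivalently $\tr\mathcal{A}=-\det\mathcal{A}$) absorbs the missing direction into a super-quadratic contribution in $\mathcal{A}$, harmless provided $\|\mathcal{A}\|_{\hat{B}^1}\ll 1$. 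The loop is then closed on $\mathcal{A}$ through a Vishik/Danchin transport estimate for \eqref{17d} in $\hat{B}^1$, whose exponential factor $\exp(C\|\nabla\u\|_{L^1_T(L^\infty)})$ is tamed by the embedding $\hat{B}^2\hookrightarrow W^{1,\infty}$ together with $\|\u\|_{L^1_T(\hat{B}^2)}\le\|(\u,\H)\|_{\mathfrak{B}^1_T}$. Setting $X(T):=\|(\u,\H)\|_{\mathfrak{B}^1_T}+\|\mathcal{A}\|_{L^\infty_T(\hat{B}^1)}$, the scheme collapses to a quadratic inequality of the form $X(T)\le C_0\gamma+C_1 X(T)^2$; a standard continuation argument yields $X(T)\le\Gamma\gamma$ uniformly in $T$, giving \eqref{ES} and global existence. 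Uniqueness follows by running the same estimate on the difference of two solutions in a space one derivative lower and applying Gronwall.

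The principal obstacle is the mismatch between the two dissipation mechanisms: the linear spectral analysis delivers dissipation for $\H$ only in the direction $\partial_{x_1}$, whereas the transport bootstrap for $\mathcal{A}$ requires $\u\in L^1_T(\hat{B}^2)$, which in turn needs a genuinely two-directional $L^2_T(\hat{B}^1)$ dissipation of $\H$. Bridging this gap is exactly the role of identity \eqref{18} together with the volume-preserving relation $\tr\mathcal{A}=-\det\mathcal{A}$. Making the dyadic cutoff between the parabolic (high-frequency) and hyperbolic (low-frequency) regimes of the hybrid norm compatible with this algebraic identity, so that every cubic error term generated by the absorption is controlled in $\tilde{B}^{0,1}$ and none escapes the closed inequality, is what I expect to be the most delicate piece of bookkeeping.
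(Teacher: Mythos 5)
Your global architecture matches the paper's: the quadratic inequality $X(T)\le C_0X(0)+C_1X(T)^2$ closed by continuation, the anisotropic frequency splitting between $2|\xi_1|\ge|\xi|^2$ and $2|\xi_1|<|\xi|^2$ recorded by the hybrid norm, the identity \eqref{18} together with $\tr\mathcal{A}=-\det\mathcal{A}$ to recover the two-directional $L^2_T(\hat{B}^1)$ dissipation of $\H$ (the paper's Lemma \ref{l2}), and a transport bound for $\mathcal{A}$ fed by $\u\in L^1_T(\hat{B}^2)$. The genuine gap is in your second step, where you freeze the linear block \eqref{e1a} and treat all quadratic terms, in particular $\nabla\times(\u\times\H)=\H\cdot\nabla\u-\u\cdot\nabla\H$, as external sources bounded by $\|(\u,\H)\|_{\mathfrak{B}^1_T}^2$. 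This fails for the convective part $\u\cdot\nabla\H$: the linear estimate needs the induction source in $L^1_T(\tilde{B}^{0,1})$, and any product law (Lemma \ref{le} or Proposition \ref{p2}) charges one full derivative to $\H$, producing factors such as $\|\nabla\H\|_{\tilde{B}^{0,1}}$ or $\|\nabla\H\|_{\hat{B}^1}$; since $\H$ has no smoothing, $\mathfrak{B}^1_T$ controls $\H$ only in $L^\infty_T(\tilde{B}^{0,1})\cap L^2_T(\hat{B}^1)$, one derivative short, so the bootstrap inequality cannot be closed this way. (You correctly avoid the analogous loss for $\u\cdot\nabla\mathcal{A}$ by invoking a transport estimate, but the same care is needed for $\H$.) This is exactly the point of the Remark following \eqref{a01}: convective terms lose a derivative and cannot be handled as force terms.

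The paper's resolution, absent from your proposal, is to keep the convection terms $\u\cdot\nabla\u$ and $\u\cdot\nabla\H$ (and $\u\cdot\nabla\mathcal{A}$) on the left of the frequency-localized equations and absorb them inside the energy identities via Lemma \ref{cl}: after localization, integration by parts moves the derivative off the unknown and onto the transporting field, so the price is $\alpha_{q,k}\|\u\|_{\hat{B}^2}$ times low-order norms of $(\u,\H)$, which is integrable in time precisely because $\u\in L^1_T(\hat{B}^2)$. This is why Proposition \ref{p00} is stated for the system \eqref{a01} with convection built in, and why the only genuine sources left in Corollary \ref{c1} are $\mathcal{L}=\H\cdot\nabla\H$, $\mathcal{M}=\H\cdot\nabla\u$ and the pressure gradient (purely quadratic by \eqref{a16b}), each of which your paraproduct estimates do handle. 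With a commutator device of this type inserted, the rest of your scheme (the Lemma \ref{l2} recovery of $\|\H\|_{L^2_T(\hat{B}^1)}$, the $\mathcal{A}$ estimate, and the continuation argument) goes through essentially as you describe.
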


Theorem \ref{MT} is verified by extending a local solution (see \cite{KW} for example), provided that the energy estimate \eqref{ES} holds true. From this standpoint, we will focus on in this paper
how to obtain the estimate \eqref{ES}.

\begin{Remark} Three remarks go as follows:
\begin{itemize}
 \item 
The solution in Theorem \ref{MT} is unique and the proof is a straightforward application of \eqref{ES}. We omit the proof of the
uniqueness here. 
\item  
Theorem \ref{MT} says that the global wellposedness of MHD with zero magnetic diffusivity strongly depends on the flow. In other words, to ensure the global wellposedness,
not only the perturbation is small, the coupling between the flow and the magnetic field needs to be very subtle (see Proposition \ref{p11} and \eqref{19}).
\item Theorem \ref{MT} and its strategy of proof can be extended to the three-dimensional case with a slight modification for the regularity of \textit{hybrid Besov spaces}. 
\end{itemize}
\end{Remark}

\bigskip\bigskip

\section{Besov Spaces}

Throughout this paper,  we use $C$ for a generic constant, and
denote  $a\le Cb$ by  $a\lesssim b$. The notation $a\thickapprox
b$ means that $a\lesssim b$ and $b\lesssim a$. Also we use
$(\alpha_q)_{q\in\mathbb{Z}}$ to denote a sequence such that
$\sum_{q\in\mathbb{Z}}\alpha_q\le 1$. $(f|g)$ denotes the inner
product of two functions $f, g$ in $L^2(\R^2)$. The standard
summation notation over the repeated index is adopted in this
paper.

The definition of the homogeneous Besov space is built on an
homogeneous Littlewood-Paley decomposition. First, we introduce a
function $\psi\in C^\infty(\R^2)$, supported in
$\mathcal{C}=\{\xi\in\R^2: \f{5}{6}\le|\xi|\le\f{12}{5}\}$ and
such that
$$\sum_{q\in\mathbb{Z}}\psi(2^{-q}\xi)=1\textrm{ if }\xi\neq 0.$$
Denoting $\mathcal{F}^{-1}\psi$ by $h$, we define the dyadic
blocks as follows:
$$\D_q f=\psi(2^{-q}D)f=2^{2q}\int_{\R^2}h(2^qy)f(x-y)dy,$$
and
$$S_q f=\sum_{p\le q-1}\D_pf.$$
The formal decomposition
\begin{equation}\label{21}
f=\sum_{q\in\mathbb{Z}}\D_qf
\end{equation}
is called homogeneous Littlewood-Paley decomposition in $\R^2$. Similarly, we use $\D_k^1$ to denote the homogeneous Littlewood-Paley decomposition in $\R$ in the direction of $x_1$.

For $s\in\R$ and $f\in \mathcal{S}'(\R^2)$, we denote
$$\|f\|_{\dot{B}^s_{p,r}}\overset{def}{=}\left(\sum_{q\in\mathbb{Z}}2^{sqr}\|\D_qf\|^r_{L^p}\right)^{\f{1}{r}}.$$
As $p=2$ and $r=1$, we denote $\|\cdot\|_{\dot{B}^s_{p,r}}$ by
$\|\cdot\|_{B^s}$.
\begin{Definition}
Let $s\in\R$, and $m=-\left[2-s\right]$. If $m<0$, we set
$$B^s=\left\{f\in \mathcal{S}'(\R^2)|\|f\|_{B^s}<\infty\textrm{
and }f=\sum_{q\in\mathbb{Z}}\D_qf\textrm{ in
}\mathcal{S}'(\R^2)\right\}.$$ If $m\ge 0$, we denote by
$\mathcal{P}_m$ the set of two variables polynomials of degree
$\le m$ and define
$$B^s=\left\{f\in \mathcal{S}'(\R^2)/\mathcal{P}_m|\|f\|_{B^s}<\infty\textrm{
and }f=\sum_{q\in\mathbb{Z}}\D_qf\textrm{ in
}\mathcal{S}'(\R^2)/\mathcal{P}_m\right\}.$$
\end{Definition}

Functions in $B^s$ has many good properties (see Proposition 2.5
in \cite{RD}):
\begin{Proposition}\label{p3}
The following properties hold:
\begin{itemize}
\item Derivation: $\|f\|_{B^s}\thickapprox \|\nabla f\|_{B^{s-1}}$;
\item Fractional derivation: let $\Gamma=\sqrt{-\D}$ and
$\sigma\in\R$; then the operator $\Gamma^\sigma$ is an isomorphism
from $B^s$ to $B^{s-\sigma}$;
\item Algebraic properties: for $s>0$, $B^s\cap L^\infty$ is an
algebra.
\end{itemize}
\end{Proposition}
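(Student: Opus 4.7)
The proposition consists of three standard properties of homogeneous Besov spaces, so the plan is to verify each of them by exploiting the frequency-localization of the dyadic blocks $\Delta_q$ together with classical Littlewood--Paley technology.

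For the derivation property $\|f\|_{B^s}\approx\|\nabla f\|_{B^{s-1}}$, the key is Bernstein's inequality: since $\Delta_q f$ has Fourier support in the annulus $|\xi|\approx 2^q$, one has $\|\Delta_q \nabla f\|_{L^2}\approx 2^q\|\Delta_q f\|_{L^2}$, the upper bound being Bernstein's inequality applied to the derivative and the lower bound following from spectral localization (writing $\Delta_q f = \tilde\Delta_q \Delta^{-1}\nabla\cdot\nabla(\Delta_q f)$ with a slightly enlarged cutoff $\tilde\Delta_q$). Multiplying by $2^{(s-1)q}$ and summing over $q\in\mathbb{Z}$ yields the equivalence of norms.

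For the fractional derivation property, the same idea applies with $\Lambda^\sigma = (-\Delta)^{\sigma/2}$ playing the role of $\nabla$: since $\Lambda^\sigma$ is a homogeneous Fourier multiplier of degree $\sigma$ and $\Delta_q f$ lives at frequencies $\approx 2^q$, we have $\|\Delta_q \Lambda^\sigma f\|_{L^2}\approx 2^{\sigma q}\|\Delta_q f\|_{L^2}$, which immediately gives that $\Lambda^\sigma : B^s\to B^{s-\sigma}$ is bounded with bounded inverse $\Lambda^{-\sigma}$; care must be taken with the quotient by polynomials when $m\ge 0$ to make sense of $\Lambda^\sigma$ modulo $\mathcal{P}_m$, but this is routine.

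The algebra property is the step requiring the most care, and I would prove it via Bony's paraproduct decomposition
\begin{equation*}
fg = T_f g + T_g f + R(f,g), \qquad T_f g = \sum_{q} S_{q-1} f\,\Delta_q g,\quad R(f,g) = \sum_{|q-q'|\le 1}\Delta_q f\,\Delta_{q'} g.
\end{equation*}
The paraproduct terms $T_f g$ and $T_g f$ are easy: $\Delta_p T_f g$ is essentially supported at frequency $2^p$, and $\|\Delta_p T_f g\|_{L^2}\lesssim \|f\|_{L^\infty}\|\Delta_p g\|_{L^2}$ (with a small shift coming from the $S_{q-1}$), giving $\|T_f g\|_{B^s}\lesssim \|f\|_{L^\infty}\|g\|_{B^s}$ and symmetrically $\|T_g f\|_{B^s}\lesssim \|g\|_{L^\infty}\|f\|_{B^s}$. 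The main obstacle is the remainder $R(f,g)$: each summand $\Delta_q f \Delta_{q'} g$ has Fourier support in a ball of radius $\lesssim 2^q$, so $\Delta_p R(f,g)$ receives contributions from all $q\ge p-N$; applying Bernstein in $L^1\to L^2$ one obtains
\begin{equation*}
\|\Delta_p R(f,g)\|_{L^2}\lesssim 2^{p}\sum_{q\ge p-N}\|\Delta_q f\|_{L^2}\|\Delta_q g\|_{L^\infty}.
\end{equation*}
This is where the positivity $s>0$ is essential: weighting by $2^{ps}$ and applying Young's convolution inequality for series (with the decay of $2^{(p-q)s}$ for $p\le q$) gives $\|R(f,g)\|_{B^s}\lesssim \|f\|_{B^s}\|g\|_{L^\infty}+\|g\|_{B^s}\|f\|_{L^\infty}$. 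Combining the three pieces yields
\begin{equation*}
\|fg\|_{B^s}\lesssim \|f\|_{L^\infty}\|g\|_{B^s}+\|g\|_{L^\infty}\|f\|_{B^s},
\end{equation*}
establishing that $B^s\cap L^\infty$ is an algebra. Since all three assertions are well-documented in the Besov-space literature (and indeed the author cites Proposition 2.5 of \cite{RD}), I would keep the exposition brief and refer the reader to standard references for the finer summability details.
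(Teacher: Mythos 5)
The paper never proves Proposition \ref{p3} itself --- it only cites Proposition 2.5 of \cite{RD} --- and your Littlewood--Paley/Bony argument is precisely the standard proof being delegated to that reference; all three parts (Bernstein plus spectral localization for the derivation and fractional derivation properties, paraproduct/remainder splitting with $s>0$ for the algebra property) are correct in outline. One small slip worth fixing: your displayed remainder bound $\|\Delta_p R(f,g)\|_{L^2}\lesssim 2^{p}\sum_{q\ge p-N}\|\Delta_q f\|_{L^2}\|\Delta_q g\|_{L^\infty}$ mixes the Bernstein $L^1\to L^2$ route (which would produce $\|\Delta_q f\|_{L^1}$ or an $L^2\times L^2$ pairing) with the H\"older route; the plain H\"older estimate without the factor $2^{p}$, namely $\|\Delta_p R(f,g)\|_{L^2}\lesssim\sum_{q\ge p-N}\|\Delta_q f\|_{L^2}\|\tilde{\Delta}_q g\|_{L^\infty}$, is all you need, and together with $s>0$ and the convolution inequality for series it already yields $\|R(f,g)\|_{B^s}\lesssim\|f\|_{B^s}\|g\|_{L^\infty}$.
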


To handle the degeneracy of the hyperbolicity, we further apply the Littlewood-Paley decomposition in $x_1$ to introduce a smaller Besov space $\hat{B}^s$ as
\begin{Definition}
Let $q, k\in \mathbb{Z}$ and $s\in\R$. We set
$$\|f\|_{\hat{B}^{s}}=\sum_{q,k\in\mathbb{Z}}2^{qs}\|\D_q\D_k^1f\|_{L^2}.$$
The Besov space $\hat{B}^{s}$ is defined by
$$\hat{B}^{s}=\left\{f\in\mathcal{S}'(\R^2)|\|f\|_{\hat{B}^{s}}<\infty\right\}.$$
Note that $\hat{B}^s\subset B^s$ for $s\in\R$ since $\ell_1\subset \ell_2$. In particular $\hat{B}^1\subset B^1\subset L^\infty$.
\end{Definition}
The product in Besov space $\hat{B}^s$ can be estimated by
\begin{Proposition}\label{p2}
For all $s, t\le 1$ such that $s+t>0$,
$$\|fg\|_{\hat{B}^{s+t-1}}\lesssim
\|f\|_{\hat{B}^{s}}\|g\|_{\hat{B}^t}.$$
\end{Proposition}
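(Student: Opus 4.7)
The approach is Bony's paraproduct decomposition adapted to the hybrid space. Decompose via the 2D Littlewood--Paley projectors,
$$fg = T_f g + T_g f + R(f,g),$$
with $T_f g = \sum_{q'} S_{q'-1} f \, \Delta_{q'} g$ and $R(f,g) = \sum_{q'} \Delta_{q'} f \, \widetilde{\Delta}_{q'} g$. I will estimate each of the three pieces in $\hat{B}^{s+t-1}$ by bounding $\sum_k \|\Delta_k^1 \Delta_q(\cdot)\|_{L^2}$ for each fixed $q$, and then summing against the weight $2^{q(s+t-1)}$.

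For the paraproduct $T_f g$, spectral localization forces $|q'-q|\le N_0$ in $\Delta_q(T_f g)$. I use
$$\|\Delta_k^1\Delta_q(S_{q'-1}f\cdot\Delta_{q'}g)\|_{L^2}\lesssim \|S_{q'-1}f\|_{L^\infty}\,\|\Delta_{q'}g\|_{L^2},$$
and convert $\|S_{q'-1}f\|_{L^\infty}$ via 2D Bernstein into $\sum_{p<q'}2^p\|\Delta_p f\|_{L^2}$, then dominate $\|\Delta_p f\|_{L^2}$ by $\sum_\ell \|\Delta_\ell^1\Delta_p f\|_{L^2}$, which is exactly the $(p)$-slice of the $\hat{B}^s$ norm up to the weight $2^{-ps}$. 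The condition $s\le 1$ makes $\sum_{p<q'}2^{p(1-s)}$ a geometric sum dominated by $2^{q'(1-s)}$, producing $\|S_{q'-1}f\|_{L^\infty}\lesssim 2^{q'(1-s)}\|f\|_{\hat{B}^s}$. The sum over $k$ on the right-hand side reassembles into the $q'$-slice of $\|g\|_{\hat{B}^t}$, after which the $\sum_q$ reduces to a discrete convolution handled by Young's inequality. The symmetric piece $T_g f$ is treated identically with $s$ and $t$ swapped, and $t\le 1$ playing the role of $s\le 1$.

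For the remainder $R(f,g)$, the product $\Delta_{q'}f\cdot\widetilde{\Delta}_{q'}g$ may produce arbitrarily low 2D frequencies, so only the indices $q'\ge q-N_0$ contribute to $\Delta_q R$. I place one factor in $L^2$ and the other in $L^\infty$ via 2D Bernstein. After rebalancing with the weight $2^{q(s+t-1)}$ this produces the kernel $2^{(q-q')(s+t)}$; the hypothesis $s+t>0$ is exactly what makes $\sum_{q'\ge q-N_0}2^{(q-q')(s+t)}$ summable, and Young's inequality for $\ell^1$ convolutions closes this piece.

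The main obstacle is properly controlling the additional $\sum_k$ built into the $\hat{B}^s$ norm without incurring a logarithmic loss in $k$. The key structural observation is that $\Delta_q\Delta_k^1 h$ vanishes for $k\ge q+O(1)$, and the $\hat{B}^s$ norm is designed so that the output $\ell^1_k$ on the left-hand side of the estimate matches the $\ell^1_\ell$ and $\ell^1_m$ sums implicit in the $\hat{B}^s$ and $\hat{B}^t$ norms of $f$ and $g$. When this matching is not automatic for a given bilinear piece, I insert an auxiliary 1D Bony decomposition in $x_1$, so that for each output frequency $k$ only $O(1)$ pairs $(\ell,m)$ survive, allowing the $\sum_k$ to be absorbed against the input $\ell^1$ structure rather than generating spurious $|q|$-factors.
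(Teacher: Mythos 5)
Your proposal is correct and follows essentially the same route as the paper's proof: a Bony paraproduct decomposition both in the full variables and in $x_1$, the Bernstein-type bound $\|S_{q'-1}\Delta^1_{k'}f\|_{L^\infty}\lesssim 2^{q'(1-s)}\|f\|_{\hat{B}^s}$ (using $s\le 1$, and symmetrically $t\le 1$ for the other paraproduct), the kernel $2^{(q-q')(s+t)}$ with $s+t>0$ for the remainder, and convolution/Young inequalities for the series. The one imprecision is your claim that after inserting the auxiliary $x_1$-decomposition each output frequency $k$ sees only $O(1)$ surviving input pairs: this is true for the $x_1$-paraproduct pieces but fails for the $x_1$-remainder piece, where (exactly as in the paper's treatment of $TR^1$ and $RR^1$) one only has the one-sided constraint $k'\ge k-2$ and must close the $k$-summation by the same convolution-type bookkeeping against the $\ell^1$ structure of the input norms that you already invoke for the $q$-sums.
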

The proof of Proposition \ref{p2} will be postponed to the Appendix.

To deal with functions with different regularities for different frequencies as suggested by the spectral analysis in Section 4, it is
more effective to work in a \textit{hybrid Besov space} $\tilde{B}^{s,t}$. 
\begin{Definition}
Let $q, k\in \mathbb{Z}$ and $s,t\in\R$. We set
$$\|f\|_{\tilde{B}^{s,t}}=\sum_{k+1\ge 2q}2^{qs}\|\D_q\D_k^1f\|_{L^2}+\sum_{k+1< 2q}2^{(2q-k)t}\|\D_q\D_k^1f\|_{L^2}.$$
The \textit{hybrid Besov space} $\tilde{B}^{s,t}$ is defined by
$$\tilde{B}^{s,t}=\left\{f\in\mathcal{S}'(\R^2)|\|f\|_{\tilde{B}^{s,t}}<\infty\right\}.$$
\end{Definition}

Note that $\D_q\D_k^1f\neq 0$ only if $k\le q$. Thus the partial sum $\sum_{k+1\ge 2q}2^{qs}\|\D_q\D_k^1f\|_{L^2}$ actually only occurs at the low frequence $q\lesssim 1$.

\begin{Remark}
Three remarks go as follows.
\begin{itemize}
 \item The space $\tilde{B}^{s,t}$ is not empty since for any function $\phi\in C^\infty(\R^2)$ with support of its fourier transform 
in the union of balls $\{\xi\in\R^2|2|\xi_1|\ge |\xi|^2\}$, it holds $\mathcal{F}\phi\in \tilde{B}^{s,t}.$
\item $\tilde{B}^{0,1}$ is continuously embedded into $L^\infty$. Indeed, for any fixed $q,k\in\mathbb{Z}$, it holds
\begin{equation*}
\begin{split}
\|\D_q\D_k^1f\|_{L^\infty}&\le \|\mathcal{F}(\D_q\D_k^1f)\|_{L^1}\\
&\lesssim
\begin{cases}
\|\mathcal{F}(\D_q\D_k^1f)\|_{L^2}\textrm{meas}(\{2|\xi_1|\ge |\xi|^2\})\quad \textrm{if}\quad k+1\ge 2q;\\
\|\xi_1^{-1}|\xi|^2\mathcal{F}(\D_q\D_k^1f)\|_{L^2}\||\xi_1||\xi|^{-2}\|_{L^2(\{2|\xi_1|<|\xi|^2\}\cap \{2^q\le |\xi|\le 2^{q+1}\})} \quad \textrm{otherwise}
\end{cases}\\
&\lesssim \max\{2, 2^{2q-k}\}\|\D_q\D_k^1f\|_{L^2}\\
&\lesssim \|f\|_{\tilde{B}^{0,1}}.
\end{split}
\end{equation*}
Thus, $\sum_{q,k}\D_q\D_k^1f$ uniformly converges to $f$ in $L^\infty$.
\item If $0\le s\le t$, $\tilde{B}^{s,t}\subset \hat{B}^s\cap\hat{B}^t$, that is
\begin{equation}\label{25a}
\|\phi\|_{\hat{B}^s\cap\hat{B}^t}=\max\{\|\phi\|_{\hat{B}^s},\|\phi\|_{\hat{B}^t}\}\lesssim \|\phi\|_{\tilde{B}^{s,t}}.
\end{equation}
\end{itemize}
\end{Remark}
An estimate for the product in our hybird Besov space $\tilde{B}^{0,1}$ is needed.
\begin{Lemma}\label{le}
There holds true
$$\|fg\|_{\tilde{B}^{0,1}}\lesssim \|f\|_{\tilde{B}^{0,1}}\|g\|_{\hat{B}^1}.$$
\end{Lemma}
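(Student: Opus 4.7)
The plan is to apply Bony's paraproduct decomposition twice---first in the isotropic direction, then in the $x_1$ direction inside each resulting piece---so as to align the decomposition with the anisotropic structure of $\tilde B^{0,1}$. Writing $fg = T_f g + T_g f + R(f,g)$ with $T_f g = \sum_p S_{p-1} f\, \Delta_p g$ and $R(f,g) = \sum_p \Delta_p f\, \tilde\Delta_p g$ (where $\tilde\Delta_p := \Delta_{p-1}+\Delta_p+\Delta_{p+1}$), I will bound
$$\sum_{q,k} \max(1, 2^{2q-k})\, \|\Delta_q\Delta_k^1(fg)\|_{L^2} \lesssim \|f\|_{\tilde B^{0,1}}\|g\|_{\hat B^1},$$
which is equivalent to $\|fg\|_{\tilde B^{0,1}}$ since the maximum encapsulates the two regions in the definition.

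For $T_g f$, the summand $S_{p-1}g\,\Delta_p f$ has isotropic frequency $\approx 2^p$, forcing $|p-q|\leq 2$ inside $\Delta_q$. To retain $x_1$-frequency information I apply a second Bony decomposition to $S_{p-1}g\cdot\Delta_p f$ in the $x_1$ direction. In the low--high sub-piece (low $x_1$-freq of $g$, high of $f$) the outer index $k$ matches the $x_1$-index of $f$, and the crude bound $\|S_{p-1}g\|_{L^\infty}\lesssim \|g\|_{\hat B^1}$ (from $\hat B^1\hookrightarrow L^\infty$) already matches the weight. In the remaining sub-pieces, where $g$ supplies the $x_1$-frequency $\approx 2^k$, I use the sharper estimate $\|\Delta_{k''}^1 S_{p-1}g\|_{L^\infty}\lesssim \beta^g_{k''}$ with $\beta^g_{k''}:=\sum_{p'}2^{p'}\|\Delta_{p'}\Delta_{k''}^1 g\|_{L^2}$ an $\ell^1$-sequence summing to $\|g\|_{\hat B^1}$. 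This follows from the anisotropic Bernstein inequality $\|\Delta_p\Delta_k^1 h\|_{L^\infty}\lesssim 2^{(p+k)/2}\|\Delta_p\Delta_k^1 h\|_{L^2}$ combined with $2^{(p'+k'')/2}\leq 2^{p'}$ when $p'\geq k''$.

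For $T_f g$ the roles are exchanged: the embedding $\tilde B^{0,1}\hookrightarrow L^\infty$ from Remark 3.1 provides $\|S_{p-1}f\|_{L^\infty}\lesssim \|f\|_{\tilde B^{0,1}}$, and the $x_1$-frequency structure of the product is inherited from $\Delta_p g$ (matched in the $\hat B^1$-norm). For the remainder $R(f,g)$, the two factors have comparable isotropic frequency so $\Delta_q$ only restricts $p\geq q-2$; I again apply $x_1$-Bony on each summand $\Delta_p f\cdot\tilde\Delta_p g$ and pair the anisotropic Bernstein inequality on one factor with the $L^2$-norm of the other, using $p\geq q$ to absorb the Bernstein factor into the weight on the left-hand side.

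The main obstacle I expect is the bookkeeping in the bad region $k+1<2q$. Specifically, in the sub-pieces of $T_g f$ where $g$ supplies the $x_1$-frequency $\approx 2^k$ while $f$ contributes at lower $x_1$-frequency $2^{k'}<2^k$, one must trade the outer weight $\max(1,2^{2p-k})\leq \max(1,2^{2p-k'})$ against the $\tilde B^{0,1}$-weight of $f$, and then sum geometrically against the $\ell^1$-sequence $\beta^g_k$. The remainder term is analogous but more delicate, because the $x_1$-frequency of the product can be arbitrarily smaller than those of either factor; the Bernstein factor must be carefully absorbed in both the isotropic and $x_1$ directions simultaneously to close the estimate.
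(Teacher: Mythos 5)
Your strategy---the double Bony decomposition (isotropic, then in $x_1$), anisotropic Bernstein, and bookkeeping of the weight $\max(1,2^{2q-k})$---is exactly what the paper intends: it omits the proof of Lemma \ref{le}, declaring it similar to that of Proposition \ref{p2}, which is proved via the nine-term decomposition \eqref{71}. Your treatment of $T_g f$ (the $\beta^g_{k}$ sequence obtained from anisotropic Bernstein) is sound. The genuine gap is the sentence claiming that for $T_f g$ ``the $x_1$-frequency structure of the product is inherited from $\Delta_p g$ (matched in the $\hat B^1$-norm).'' In the anisotropic regime the hybrid norm is built for---the product sitting at isotropic frequency $2^{q}$ (set by $g$) and $x_1$-frequency $2^{k}$ with $k\ll q$ (set either by $f$, a sub-piece you never address, or by a low-$\xi_1$ block of $g$)---the left-hand side demands the weight $2^{2q-k}$, which is quadratic in $2^{q}$ for fixed $k$, whereas $\|g\|_{\hat B^1}$ supplies only the single power $2^{q}$ and $\|f\|_{\tilde B^{0,1}}$ supplies a quantity tied to $f$'s own low frequencies, independent of $q$. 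No choice of H\"older/Bernstein splitting recovers the missing factor $2^{q-k}$.

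Moreover this is not a repairable bookkeeping issue: the required sub-estimate, and with it the inequality as stated, fails. Let $\phi$ be Schwartz on $\R$ with $\hat\phi\ge0$ supported in $[-1/10,1/10]$ and $\phi(0)>0$, and set
\begin{equation*}
f(x)=e^{3ix_1/2}\,\phi(x_1)\,e^{3i\,2^{5}x_2/2}\,\phi(2^{5}x_2),\qquad
g_n(x)=\phi(x_1)\,e^{3i\,2^{n}x_2/2}\,\phi(2^{n}x_2).
\end{equation*}
Then $f$ occupies a single block $(q,k)=(5,0)$, so $\|f\|_{\tilde B^{0,1}}$ is a fixed constant, while $\|g_n\|_{\hat B^{1}}\le C2^{n/2}$ (the $x_1$-blocks of $\phi$ sum geometrically and $\|g_n\|_{L^2}\approx 2^{-n/2}$). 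The product $fg_n$ has Fourier support in $\{\,|\xi_1|\approx 3/2,\ |\xi|\approx 2^{n}\,\}$ and $\|fg_n\|_{L^{2}}\ge c\,2^{-n/2}$ (no cancellation, since $\phi(2^{5}x_2)$ is bounded below near $x_2=0$), hence $\|fg_n\|_{\tilde B^{0,1}}\gtrsim 2^{2n}\,2^{-n/2}$ and the ratio of the two sides of the lemma is $\gtrsim 2^{n}$, unbounded. This pair lies entirely in the piece $T\bar T^{1}(f,g)$ of \eqref{71}, which is precisely not one of the two pieces detailed for Proposition \ref{p2}, so the decisive step is genuinely not ``similar'' to that proof. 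Consequently your outline cannot be closed as written; the obstruction you flagged in the region $k+1<2q$ is real, and any correct version needs either a norm on $g$ carrying the anisotropic weight $2^{2q-k}$ at high frequencies or additional structure of the specific products $\H\cdot\nabla\u$ used in Section 5.
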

The proof of Lemma \ref{le} is similar to that of Proposition \ref{p2} (or Proposition 5.2 in \cite{RD}) and we omit it here.

In order to state our existence result, we introduce some
functional spaces and explain the notation. Let $T>0$,
$r\in[0,\infty]$ and $X$ be a Banach space. We denote by
$\mathcal{M}(0,T;X)$ the set of measurable functions on $(0,T)$
valued in $X$. For $f\in \mathcal{M}(0,T; X)$, we define
$$\|f\|_{L^r_T(X)}=\left(\int_0^T\|f(\tau)\|_X^rd\tau\right)^{\f{1}{r}}\textrm{
if }r<\infty,$$
$$\|f\|_{L^\infty_T(X)}=\sup \textrm{ess}_{\tau\in(0,T)}\|f(\tau)\|_X.$$
Denote $L^r(0,T;X)=\{f\in
\mathcal{M}(0,T;X)|\|f\|_{L^r_T(X)}<\infty\}$ If $T=\infty$, we
denote by $L^r(\R^+; X)$ and $\|f\|_{L^r(X)}$ the corresponding
spaces and norms. Also denote by $C([0,T],X)$ (or $C(\R^+,X)$) the
set of continuous X-valued functions on $[0,T]$ (resp. $\R^+$).

Throughout this paper, the following estimates for the convective
terms arising in the localized system is used several times.
\begin{Lemma}\label{cl}
Let $G$ be a smooth function away from the origin with the form $|\xi|^m\xi_1^n$. 
Then there hold true
\begin{equation}\label{25b}
 \begin{split}
&|(G(D)\D_q\D_k^1(e\cdot\nabla f)|G(D)\D_q\D_k^1 f)|\\
&\quad\le C\alpha_{q,k}
2^{nk+mq}
\|e\|_{\hat{B}^{2}}\|f\|_{\hat{B}^{0}}\left\|G(D)\D_q\D_k^1
f\right\|_{L^2},  
 \end{split}
\end{equation}
\begin{equation}\label{25}
\begin{split}
&|(G(D)\D_q\D_k^1(e\cdot\nabla f)|G(D)\D_q\D_k^1 f)|\\
&\quad\le C\alpha_{q,k}
2^{nk+mq}\min\{2^{-1}, 2^{k-2q}\}
\|e\|_{\hat{B}^{2}}\|f\|_{\tilde{B}^{0,1}}\left\|G(D)\D_q\D_k^1
f\right\|_{L^2},
\end{split}
\end{equation}
and
\begin{equation}\label{26}
\begin{split}
&\left|(G(D)\D_q\D_k^1(e\cdot\nabla f)|\D_q\D_k^1 g)+(\D_q\D_k^1(e\cdot\nabla
g)|G(D)\D_q\D_k^1 f)\right|\\&\le
C\alpha_{q,k}\|e\|_{\hat{B}^{2}}\Big(\left\|G(D)\D_q\D_k^1f\right\|_{L^2}\|g\|_{\hat{B}^0}+2^{nk+mq}\min\{2^{-1},2^{k-2q}\}
\|f\|_{\tilde{B}^{0,1}}\|\D_q\D_k^1g\|_{L^2}\Big),
\end{split}
\end{equation}
where $\sum_{q,k\in\mathbb{Z}}\alpha_{q,k}\le 1$.
\end{Lemma}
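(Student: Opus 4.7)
The plan is to combine an anisotropic Bony paraproduct decomposition with the classical commutator trick, adapted to the double dyadic blocks $\D_q\D_k^1$. Writing
\begin{equation*}
\D_q\D_k^1(e\cd\na f)=e\cd\na(\D_q\D_k^1 f)+[\D_q\D_k^1,e\cd]\na f,
\end{equation*}
the inner product in \eqref{25b} splits into a transport contribution and a commutator contribution. Commuting $G(D)$ across the transport piece and integrating by parts reduces its leading term to one bounded by $\tfrac{1}{2}\|\Dv\,e\|_{L^\infty}\|G(D)\D_q\D_k^1 f\|_{L^2}^2$. Using $\hat{B}^1\subset L^\infty$ (noted in the paper) one has $\|\Dv\,e\|_{L^\infty}\lesssim\|e\|_{\hat{B}^{2}}$, and peeling off one factor of $\|G(D)\D_q\D_k^1 f\|_{L^2}\lesssim 2^{nk+mq}\|\D_q\D_k^1 f\|_{L^2}\lesssim 2^{nk+mq}\a_{q,k}\|f\|_{\hat{B}^0}$ gives the desired bound in the form of \eqref{25b}.

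For the commutator piece $[\D_q\D_k^1,e\cd]\na f$, one applies Bony's decomposition $e\cd\na f=T_e\na f+T_{\na f}e+R(e,\na f)$. A first-order Taylor expansion of the convolution kernel defining $\D_q\D_k^1$ in $T_e$ yields a factor $\|\na e\|_{L^\infty}\lesssim\|e\|_{\hat{B}^2}$, while the localization on $|\xi|\approx 2^q$, $|\xi_1|\approx 2^k$ supplies the prefactor $2^{nk+mq}$ from the symbol $|\xi|^m\xi_1^n$. The pieces $T_{\na f}e$ and $R(e,\na f)$ are estimated by the product rule in Proposition \ref{p2}, and the remaining $\|f\|_{\hat{B}^{0}}$ factor is extracted after dualizing.

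For the sharper estimate \eqref{25}, the same decomposition is used but the regularity of $f$ is measured in the hybrid norm $\|f\|_{\tilde{B}^{0,1}}$. Splitting the analysis into the two regimes $k+1\ge 2q$ and $k+1<2q$ matches exactly the two pieces in the definition of $\tilde{B}^{s,t}$: in the low-$q$ regime the trivial bound $\min\{2^{-1},2^{k-2q}\}=2^{-1}$ suffices and $\D_q\D_k^1 f$ is controlled directly; in the high-$q$ regime the hybrid weight $2^{2q-k}$ supplies precisely the factor $2^{k-2q}$. The paraproduct estimates then assemble a sequence $(\a_{q,k})$ with $\sum\a_{q,k}\le 1$.

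Finally, estimate \eqref{26} follows from the antisymmetric identity
\begin{equation*}
(e\cd\na\phi|\psi)+(e\cd\na\psi|\phi)=-(\Dv\,e\,\phi|\psi),
\end{equation*}
applied to the appropriate localized pieces of $f$ and $g$; this cancels the two leading transport contributions and leaves only commutators of the type already handled above. The two terms on the right-hand side of \eqref{26} correspond respectively to the contribution in which the symbol $G(D)$ falls on $f$ (with $g$ measured in $\hat{B}^0$) and the symmetric contribution in which it falls on $g$ (with $f$ measured in the hybrid $\tilde{B}^{0,1}$, carrying the $\min\{2^{-1},2^{k-2q}\}$ factor from the argument for \eqref{25}). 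The main technical obstacle is the careful bookkeeping across the two frequency regimes of $\tilde{B}^{s,t}$ and the verification of summability of $(\a_{q,k})$; this parallels the proof of Proposition \ref{p2} (deferred to the Appendix) and of Proposition 5.2 of \cite{RD}.
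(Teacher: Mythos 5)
Your proposal is correct and follows essentially the same route as the paper's Appendix proof: an anisotropic Bony decomposition, a first-order Taylor (i.e.\ commutator) estimate for the off-diagonal pieces, integration by parts to shift the derivative onto $\Dv\, e$ (bounded in $L^\infty$ via $\hat{B}^1\subset L^\infty$ by $\|e\|_{\hat{B}^{2}}$), the symbol bound $2^{nk+mq}$ on the localized blocks, the regime split $k+1\ge 2q$ versus $k+1<2q$ to produce the factor $\min\{2^{-1},2^{k-2q}\}$, and the antisymmetric cancellation for \eqref{26}. The only difference is bookkeeping: you peel off the commutator $[\D_q\D_k^1,e\cdot\nabla]$ first and then apply Bony's decomposition to it, whereas the paper decomposes $e\cdot\nabla f$ directly into the nine anisotropic paraproduct pieces and isolates the ``worst'' term, which amounts to the same estimates.
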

We postpone the proof of Lemma \ref{cl} to the Appendix.

\bigskip\bigskip
\section{Dissipation Estimates}

This section aims at the dissipation estimate for $(\u, \H)$.

\subsection{Dissipation of velocity}
In this subsection, we consider the following structure:
\begin{subequations}\label{a01}
 \begin{align}
&\partial_tu+u\cdot\nabla u-\D u-\partial_{x_1}v=L\label{a01a}\\
&\partial_tv+u\cdot\nabla v-\partial_{x_1}u=M.\label{a01b}
 \end{align}
\end{subequations}
where $u, v$ are vector-valued functions in $\R^2$, and $L,M$ are nonlinear terms of $(u,v)$.
\begin{Remark}
In system \eqref{a01}, instead the linear system \eqref{e1a}, we add convective terms because no matter how smooth functions $u,v$ are, convective terms $\u\cdot\nabla \H$ lose one derivative,
and hence induce difficulties to treat them as force terms.
\end{Remark}

Without convective terms, the solution of \eqref{a01} reads
\begin{equation*}
 \left(\begin{array}{cc}
u(t)\\ v(t)
 \end{array}\right)=e^{A(\partial)t} \left(\begin{array}{cc}
u_0\\ v_0
 \end{array}\right)+\int_0^te^{A(\partial)(t-\tau)}\left(\begin{array}{cc}
L(\tau)\\ M(\tau)
 \end{array}\right)d\tau
\end{equation*}
with 
\begin{equation*}
 A(\partial)=\left(\begin{array}{cc}
                    \D&\partial_{x_1}\\
                    \partial_{x_1}&0
                   \end{array}\right).
\end{equation*}
The operator $A(\partial)$ has a symbol
\begin{equation*}
 A(\xi)=\left(\begin{array}{cc}
                    -|\xi|^2&-i\xi_1\\
                    -i\xi_1&0
                   \end{array}\right),
\end{equation*}
and we now consider the eigenvalues of $A(\xi)$. By direct computations, the eigenvalues of $A(\xi)$ take the form of
$$\lambda_{\pm}=-\f12\Big(|\xi|^2\pm \sqrt{|\xi|^4-4\xi_1^2}\Big).$$

According to the frequence, one needs to consider two cases: $\f{2|\xi_1|}{|\xi|^2}\le 1$ and $\f{2|\xi_1|}{|\xi|^2}\ge 1$. As $\f{2|\xi_1|}{|\xi|^2}\ge 1$, the eigenvalues of $A(\xi)$ has
the form
$$\lambda_{\pm}=-\f{|\xi|^2}{2}\left(1\pm i\sqrt{\f{4\xi_1^2}{|\xi|^4}-1}\right),$$ and all eigenvectors have parabolic dissipations as $e^{-t|\xi|^2}$. As $\f{2|\xi_1|}{|\xi|^2}\le 1$, the eigenvalues of $A(\xi)$ take
the form
\begin{equation}\label{a2}
\lambda_{\pm}=-\f{|\xi|^2}{2}\left(1\pm \sqrt{1-\f{4\xi_1^2}{|\xi|^4}}\right),
\end{equation} and hence the eigenvector associated to $\lambda_+$ has a parabolic dissipation while the eigenvector associated to $\lambda_{-}$ has a damping effect
in the direction of $\xi_1$.

For the system \eqref{a01}, we have the following dissipation for $u$.
\begin{Proposition}\label{p00}
For solutions $(u,v)$ of \eqref{a01}, it holds true
\begin{equation}\label{a2a}
\begin{split}
&\|u\|_{L^\infty(\hat{B}^0)}+\|v\|_{L^\infty(\tilde{B}^{0,1})}+\|u\|_{L^1(\hat{B}^2)}\\
&\qquad+\int_0^\infty\left(\sum_{k+1\ge 2q}2^{2q}\|\D_q\D_k^1v\|_{L^2}+\sum_{k+1<2q}\|\D_q\D_k^1\partial_{x_1}v\|_{L^2}\right)dt\\
&\quad\lesssim \|u(0)\|_{\hat{B}^0}+\|v(0)\|_{\tilde{B}^{0,1}}+\|L\|_{L^1(\hat{B}^0)}+\|M\|_{L^1(\tilde{B}^{0,1})}\\
&\qquad+\|u\|_{L^1(\hat{B}^2)}(\|v\|_{L^\infty(\tilde{B}^{0,1})}+\|u\|_{L^\infty(\hat{B}^0)}).
\end{split}
\end{equation}
\end{Proposition}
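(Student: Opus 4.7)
The strategy is to perform frequency-localized energy estimates on the dyadic blocks $u_{q,k}:=\D_q\D_k^1 u$ and $v_{q,k}:=\D_q\D_k^1 v$, augmented by a cross term that extracts the anisotropic dissipation for $v$ predicted by the spectral analysis \eqref{a2}; the overall blueprint follows \cite{RD} for the compressible Navier--Stokes system.

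Apply $\D_q\D_k^1$ to \eqref{a01a}--\eqref{a01b}. The transport operator $u\cdot\nabla$ commutes with $\D_q\D_k^1$ only up to a commutator, so both the symmetric pairings $(u\cdot\nabla u_{q,k}|u_{q,k})$, $(u\cdot\nabla v_{q,k}|v_{q,k})$ and the commutator remainders will be controlled via Lemma \ref{cl}; summed with the proper weights, they contribute the factor $\|u\|_{L^1(\hat B^2)}\bigl(\|v\|_{L^\infty(\tilde B^{0,1})}+\|u\|_{L^\infty(\hat B^0)}\bigr)$ on the right-hand side. The plain $L^2$-energy identity (pairing the $u$-equation with $u_{q,k}$ and the $v$-equation with $v_{q,k}$) produces only the parabolic dissipation $\|\nabla u_{q,k}\|_{L^2}^2$, because the $\partial_{x_1}$ cross-terms cancel by antisymmetry. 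To recover dissipation for $v_{q,k}$, introduce a Lyapunov functional of the form
\[
\mathcal L_{q,k}^2:=\|u_{q,k}\|_{L^2}^2+\|v_{q,k}\|_{L^2}^2-2\eta\,\bigl(u_{q,k}\,|\,\Theta\,\partial_{x_1} v_{q,k}\bigr),
\]
with $\eta>0$ small and $\Theta$ a bounded Fourier multiplier of order $-2$ (essentially $\Lambda^{-2}$, truncated at low frequencies so that $\mathcal L_{q,k}^2\approx\|u_{q,k}\|_{L^2}^2+\|v_{q,k}\|_{L^2}^2$). Differentiating in time, using $(\D u_{q,k}|\partial_{x_1}\Lambda^{-2}v_{q,k})=-(u_{q,k}|\partial_{x_1}v_{q,k})$ together with $(u_{q,k}|\partial_{x_1}^2\Lambda^{-2}u_{q,k})=-\|\partial_{x_1}\Lambda^{-1}u_{q,k}\|_{L^2}^2$, and absorbing the indefinite term $2\eta(u_{q,k}|\partial_{x_1}v_{q,k})$ via Cauchy--Schwarz, one obtains
\[
\f{d}{dt}\mathcal L_{q,k}^2+c\|\nabla u_{q,k}\|_{L^2}^2+c\eta\|\partial_{x_1}\Lambda^{-1}v_{q,k}\|_{L^2}^2\lesssim \mathcal S_{q,k}(t),
\]
where $\mathcal S_{q,k}(t)$ collects the $L^2$-pairings against $L_{q,k},M_{q,k}$ and against the convective remainders.

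Next, split the $(q,k)$-plane along the spectral dichotomy of \eqref{a2}. In the low-frequency regime $k+1\ge 2q$, $|\xi_1|\gtrsim|\xi|^2$ on the spectral support, hence $\|\partial_{x_1}\Lambda^{-1}v_{q,k}\|_{L^2}^2\gtrsim 2^{2q}\|v_{q,k}\|_{L^2}^2$; after dividing by $\mathcal L_{q,k}$ and integrating in time, this yields a $2^{2q}$-weighted $L^1_t$ bound for $\|v_{q,k}\|_{L^2}$. In the high-frequency regime $k+1<2q$, the dissipation degenerates to $\|\partial_{x_1}\Lambda^{-1}v_{q,k}\|_{L^2}^2\approx 2^{2k-2q}\|v_{q,k}\|_{L^2}^2$; the same procedure produces $\int_0^\infty 2^k\|v_{q,k}\|_{L^2}\,dt\lesssim 2^{2q-k}\mathcal L_{q,k}(0)+\cdots$, whose prefactor $2^{2q-k}$ is precisely the hybrid weight of $\tilde B^{0,1}$ on the initial data. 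Summing over $(q,k)$ with unit weight for $\|u\|_{\hat B^0}$, and with the $\tilde B^{0,1}$-weights $1$ (low-frequency) and $2^{2q-k}$ (high-frequency) for $v$, reconstructs the left-hand side of \eqref{a2a}; the source and convective terms, estimated via Lemma \ref{cl}, furnish the right-hand side.

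The main obstacle is the careful handling of the error term $2\eta\|\partial_{x_1}\Lambda^{-1}u_{q,k}\|_{L^2}^2$ generated by the cross term: it is not uniformly dominated by $\|\nabla u_{q,k}\|_{L^2}^2$ at very low frequencies, and must be absorbed using the dyadic support constraint $k\le q$ (which gives $\|\partial_{x_1}\Lambda^{-1}u_{q,k}\|_{L^2}\le\|u_{q,k}\|_{L^2}\lesssim\mathcal L_{q,k}$), the smallness of $\eta$, and a Gronwall argument on $\mathcal L_{q,k}$. A second delicate bookkeeping task is to align the sharp factor $\min\{2^{-1},2^{k-2q}\}$ provided by Lemma \ref{cl} with the hybrid weight defining $\tilde B^{0,1}$, so that the convective contribution scales consistently across both frequency regimes.
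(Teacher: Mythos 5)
Your blueprint (frequency-localized energy estimates with a cross term, the split along $k+1\gtrless 2q$, and Lemma \ref{cl} for the convection terms) is the same family of argument as the paper's, but two of its concrete steps do not go through as written. First, at low frequencies your cross term uses a \emph{fixed} coefficient $\eta$ and the \emph{fixed} multiplier $\Theta\partial_{x_1}\approx\Lambda^{-2}\partial_{x_1}$. On a block with $|\xi|\sim 2^q$, $|\xi_1|\sim 2^k$ the anti-dissipative error is $\eta\|\partial_{x_1}\Lambda^{-1}\D_q\D_k^1u\|_{L^2}^2\approx\eta\,2^{2(k-q)}\|\D_q\D_k^1u\|_{L^2}^2$, while the only available absorption is $\|\nabla \D_q\D_k^1u\|_{L^2}^2\approx 2^{2q}\|\D_q\D_k^1u\|_{L^2}^2$. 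In the regime $k+1\ge 2q$ one can take $k\approx q\to-\infty$, where the ratio $2^{2k-4q}\approx 2^{-2q}$ is unbounded; hence no fixed smallness of $\eta$ absorbs the error, and the Gronwall argument you invoke only yields $e^{c\eta t}$ growth, which is useless for a global-in-time bound and for the $L^1_t$ dissipation. The paper's Case 1 avoids this precisely by taking a block-adapted coefficient, $-\iota 2^{2q-2k+1}(\D_q\D_k^1u|\D_q\D_k^1\partial_{x_1}v)$, so that both the gained $v$-dissipation and the incurred $u$-error scale like $2^{2q}\|\cdot\|_{L^2}^2$ and can be absorbed uniformly (see \eqref{a8}--\eqref{a10}).

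Second, and more structurally, in the regime $k+1<2q$ your functional weights $u_{q,k}$ and $v_{q,k}$ equally, and the coupled dissipation only gives the minimal rate $2^{2k-2q}$. To convert $\int 2^{2k-2q}\|\D_q\D_k^1v\|_{L^2}\,dt$ into the claimed $\int\|\D_q\D_k^1\partial_{x_1}v\|_{L^2}\,dt$ you must multiply the scalar block inequality by $2^{2q-k}$; but then $\|\D_q\D_k^1u(0)\|_{L^2}$, $\|\D_q\D_k^1L\|_{L^2}$ and the $u$-convection remainders inherit the same unbounded weight, and $\sum_{k+1<2q}2^{2q-k}\|\D_q\D_k^1u(0)\|_{L^2}$ is not controlled by $\|u(0)\|_{\hat B^0}$ (nor the analogous sum for $L$ by $\|L\|_{L^1(\hat B^0)}$), so the estimate does not close. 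This is exactly why the paper's Case 2 builds the functional from $\mathcal R_1^2\D_q\D_k^1u$ and $\partial_{x_1}\D_q\D_k^1v$: the $u$-component enters pre-multiplied by the symbol $\xi_1^2/|\xi|^2\sim 2^{2k-2q}$, so that after multiplying by $2^{2q-2k}$ the $u$-entries carry unit weight while the $v$-entries carry exactly the hybrid weight $2^{2q-k}$ (see \eqref{a14}--\eqref{a15}). In addition, this coupled estimate only yields the weak rate $2^{2k-2q}$ for $u$, so your scheme gives no $\|u\|_{L^1(\hat B^2)}$ control at these frequencies; the paper needs its separate Step 2 (``smoothing effect'', \eqref{a16}), treating $\partial_{x_1}v$ as a source in the parabolic $u$-equation and re-using the just-established $L^1_t$ bound on $\D_q\D_k^1\partial_{x_1}v$. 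Your proposal omits both devices, so as written it establishes neither the high-frequency part of the $v$-dissipation nor the full $L^1(\hat B^2)$ bound for $u$.
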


\begin{proof} 
We divide the proof into two steps.

\texttt{Step 1: Energy Estimates.}

Applying the Littlewood-Paley decomposition $\D_q\D_k^1$ to \eqref{a01}, one has
\begin{subequations}\label{a1}
 \begin{align}
&\partial_t\D_q\D_k^1 u+\D_q\D_k^1(u\cdot\nabla u)-\D \D_q\D_k^1 u-\partial_{x_1}\D_q\D_k^1v=\D_q\D_k^1L\label{a1a}\\
&\partial_t\D_q\D_k^1v+\D_q\D_k^1(u\cdot\nabla v)-\partial_{x_1}\D_q\D_k^1u=\D_q\D_k^1M.\label{a1b}
 \end{align}
\end{subequations}
We consider two cases according to the frequence.

\textit{Case 1: $k+1\ge 2q$.} 
For $\iota>0$, define
$$f_{q,k}^2=\|\D_q\D_k^1 u\|_{L^2}^2+\|\D_q\D_k^1v\|_{L^2}^2-\iota2^{2q-2k+1}(\D_q\D_k^1u|\D_q\D_k^1\partial_{x_1}v).$$

Taking the $L^2$-product of \eqref{a1a} with $\D_q\D_k^1u$, we obtain
\begin{equation}\label{a4}
\begin{split}
 &\f12\f{d}{dt}\|\D_q\D_k^1u\|_{L^2}^2+(\D_q\D_k^1(u\cdot\nabla u)|\D_q\D_k^1u)+\|\Lambda \D_q\D_k^1u\|_{L^2}^2-(\partial_{x_1}\D_q\D_k^1v|\D_q\D_k^1u)\\
&\quad=(\D_q\D_k^1L|\D_q\D_k^1u).
\end{split}
\end{equation}
Taking the $L^2$-product of \eqref{a1b} with $\D_q\D_k^1v$, we obtain
\begin{equation}\label{a5}
\begin{split}
\f12\f{d}{dt}\|\D_q\D_k^1v\|_{L^2}^2+(\D_q\D_k^1(u\cdot\nabla v)|\D_q\D_k^1v)-(\partial_{x_1}\D_q\D_k^1u|\D_q\D_k^1v)=(\D_q\D_k^1M|\D_q\D_k^1v).
\end{split}
\end{equation}

For the cross term $(\D_q\D_k^1u|\D_q\D_k^1\partial_{x_1}v)$, applying $\partial_{x_1}$ to \eqref{a1b}, and then multiplying resulting equation and \eqref{a1a} 
by $\D_q\D_k^1\partial_{x_1}v$ and $\D_q\D_k^1u$ respectively, one obtains
\begin{equation}\label{a7}
\begin{split}
&\f{d}{dt}(\D_q\D_k^1u|\D_q\D_k^1\partial_{x_1}v)-\|\partial_{x_1}\D_q\D_k^1 v\|_{L^2}^2+\|\partial_{x_1}\D_q\D_k^1u\|_{L^2}^2+(\Lambda^2\D_q\D_k^1u|\D_q\D_k^1\partial_{x_1}v)\\
&\qquad+(\D_q\D_k^1(u\cdot\nabla u)|\D_q\D_k^1\partial_{x_1}v)+(\partial_{x_1}\D_q\D_k^1(u\cdot\nabla v)|\D_q\D_k^1u)\\
&\quad=(\D_q\D_k^1L|\D_q\D_k^1\partial_{x_1}v)+(\D_q\D_k^1\partial_{x_1}M|\D_q\D_k^1v).
\end{split}
\end{equation}

A linear combination of \eqref{a4}-\eqref{a7} gives
\begin{equation}\label{a8}
 \begin{split}
&\f12\f{d}{dt}f_{q,k}^2+(1-\iota)\|\Lambda\D_q\D_k^1 u\|_{L^2}^2+\iota\|\Lambda\D_q\D_k^1 v\|_{L^2}^2\\
&\quad-\iota 2^{2q-2k}(\Lambda^2\D_q\D_k^1u|\D_q\D_k^1\partial_{x_1}v)=\mathcal{X}_{q,k},
 \end{split}
\end{equation}
with
\begin{equation*}
 \begin{split}
\mathcal{X}_{q,k}&\overset{def}=(\D_q\D_k^1L|\D_q\D_k^1u)+(\D_q\D_k^1M|\D_q\D_k^1v)-\iota 2^{2q-2k}(\D_q\D_k^1L|\D_q\D_k^1\partial_{x_1}v)\\
&\quad-\iota 2^{2q-2k}(\D_q\D_k^1\partial_{x_1}M|\D_q\D_k^1v)+\iota 2^{2q-2k}(\D_q\D_k^1(u\cdot\nabla u)|\D_q\D_k^1\partial_{x_1}v)\\
&\quad+\iota 2^{2q-2k}(\partial_{x_1}\D_q\D_k^1(u\cdot\nabla v)|\D_q\D_k^1u)+(\D_q\D_k^1(u\cdot\nabla u)|\D_q\D_k^1u)\\
&\quad+(\D_q\D_k^1(u\cdot\nabla v)|\D_q\D_k^1v).
 \end{split}
\end{equation*}

Since $k+1\ge 2q$, Bernstein's inequality gives
\begin{equation}\label{a9}
2^{2q-2k}\|\partial_{x_1}\D_q\D_k^1\phi\|_{L^2}\lesssim \|\D_q\D_k^1\phi\|_{L^2},
\end{equation} 
and thus
$$f_{q,k}^2\approx \|\D_q\D_k^1u\|_{L^2}^2+\|\D_q\D_k^1v\|_{L^2}^2 $$if $\iota$ is chosen to be sufficiently small.

Since $k+1\ge 2q$, it follows that $|\xi|\lesssim 1$ or equivalently $q,k\lesssim 1$. Using Bernstein's inequality, one has 
\begin{equation*}
 \begin{split}
&(1-\iota)\|\Lambda\D_q\D_k^1 u\|_{L^2}^2+\iota\|\Lambda\D_q\D_k^1 v\|_{L^2}^2-\iota 2^{2q-2k}(\Lambda^2\D_q\D_k^1u|\D_q\D_k^1\partial_{x_1}v)\\
&\quad\approx \|\Lambda\D_q\D_k^1 u\|_{L^2}^2+\|\Lambda\D_q\D_k^1 v\|_{L^2}^2\ge C2^{2q}\Big(\|\D_q\D_k^1 u\|_{L^2}^2+\|\D_q\D_k^1 v\|_{L^2}^2\Big).
 \end{split}
\end{equation*}
Here we used
$$2^{2q-2k}|(\Lambda^2\D_q\D_k^1u|\D_q\D_k^1\partial_{x_1}v)|\lesssim \|\Lambda\D_q\D_k^1u\|_{L^2}\|\Lambda\D_q\D_k^1v\|_{L^2}.$$

Based on \eqref{a9}, one deduces from Lemma \ref{cl} that
$$|\mathcal{X}_{q,k}|\lesssim f_{q,k}\Big(\|\D_q\D_k^1L\|_{L^2}+\|\D_q\D_k^1M\|_{L^2}+\alpha_{q,k}\|u\|_{\hat{B}^2}(\|v\|_{\tilde{B}^{0,1}}+\|u\|_{\hat{B}^0})\|\Big),$$ and hence
it follows from \eqref{a8}, \eqref{a9} and Bernstein's inequality that there is a positive constant $\beta$ such that
\begin{equation}\label{a10}
\f{d}{dt}f_{q,k}+\beta 2^{2q} f_{q,k}\lesssim \|\D_q\D_k^1L\|_{L^2}+\|\D_q\D_k^1M\|_{L^2}+\alpha_{q,k}\|u\|_{\hat{B}^2}(\|v\|_{\tilde{B}^{0,1}}+\|u\|_{\hat{B}^0}).
\end{equation}
Summing \eqref{a10} over $q,k\in\mathbb{Z}$ and integrating over $t\in \R$, one obtains \eqref{a2a}.

\textit{Case 2: $k+1< 2q$.} 
Define
$$f_{q,k}^2=2\|\D_q\D_k^1\mathcal{R}_1^2u\|_{L^2}^2+\|\D_q\D_k^1\partial_{x_1}v\|_{L^2}^2+2(\D_q\D_k^1\mathcal{R}_1^2u, \D_q\D_k^1\partial_{x_1}v),$$
where $\mathcal{R}_1$ denotes the Riesz operator with the symbol $i\xi_1/|\xi|$.

Applying the operator $\mathcal{R}_1^2$ to \eqref{a1a}, and the taking the $L^2$ product of the resulting equation with $\D_q\D_k^1\mathcal{R}_1^2u$, one has
\begin{equation}\label{a10a}
\begin{split}
&\f12\f{d}{dt}\|\D_q\D_k^1\mathcal{R}_1^2u\|_{L^2}^2+(\mathcal{R}_1^2\D_q\D_k^1(u\cdot\nabla u)|\mathcal{R}_1^2\D_q\D_k^1u)+\|\mathcal{R}_1\D_q\D_k^1\partial_{x_1}u\|_{L^2}^2\\
&\quad-(\mathcal{R}_1^2\partial_{x_1}\D_q\D_k^1v|\mathcal{R}_1^2\D_q\D_k^1u)=(\mathcal{R}_1^2\D_q\D_k^1L|\mathcal{R}_1^2\D_q\D_k^1u).
\end{split}
\end{equation}

Applying the operator $\partial_{x_1}$ in \eqref{a1b}, and taking the $L^2$ product of the resulting equation with $\D_q\D_k^1\partial_{x_1}v$, one has
\begin{equation}\label{a6}
\begin{split}
&\f12\f{d}{dt}\|\D_q\D_k^1\partial_{x_1}v\|_{L^2}^2+(\partial_{x_1}\D_q\D_k^1(u\cdot\nabla v)|\D_q\D_k^1\partial_{x_1}v)-(\D_q\D_k^1\partial_{x_1}^2u|\D_q\D_k^1\partial_{x_1}v)\\
&\quad=(\D_q\D_k^1\partial_{x_1}M|\D_q\D_k^1\partial_{x_1}v).
\end{split}
\end{equation}

For the cross term $(\D_q\D_k^1\mathcal{R}_1^2u|\D_q\D_k^1\partial_{x_1}v)$, applying $\mathcal{R}_1^2$ and $\partial_{x_1}$ to \eqref{a1a} and \eqref{a1b}
respectively, and then multiplying resulting equations by $\D_q\D_k^1\partial_{x_1}v$ and $\D_q\D_k^1\mathcal{R}_1^2u$ respectively, one obtains
\begin{equation}\label{a7a}
\begin{split}
&\f{d}{dt}(\D_q\D_k^1\mathcal{R}_1^2u|\D_q\D_k^1\partial_{x_1}v)+\|\mathcal{R}_1\D_q\D_k^1\partial_{x_1} v\|_{L^2}^2-\|\mathcal{R}_1\D_q\D_k^1\partial_{x_1}u\|_{L^2}^2\\
&\qquad+(\mathcal{R}_1^2\D_q\D_k^1(u\cdot\nabla u)|\D_q\D_k^1\partial_{x_1}v)+(\partial_{x_1}\D_q\D_k^1(u\cdot\nabla v)|\mathcal{R}_1^2\D_q\D_k^1u)\\
&\qquad+(\partial_{x_1}^2\D_q\D_k^1u|\D_q\D_k^1\partial_{x_1}v)\\
&\quad=(\D_q\D_k^1\mathcal{R}_1^2L|\D_q\D_k^1\partial_{x_1}v)+(\D_q\D_k^1\partial_{x_1}M|\D_q\D_k^1\mathcal{R}_1^2v).
\end{split}
\end{equation}

A linear combination of \eqref{a10a}-\eqref{a7a} gives
\begin{equation}\label{a11}
 \begin{split}
&\f12\f{d}{dt}f_{q,k}^2+\|\mathcal{R}_1\D_q\D_k^1 \partial_{x_1}u\|_{L^2}^2+\|\mathcal{R}_1\D_q\D_k^1\partial_{x_1}v\|_{L^2}^2-2(\mathcal{R}_1^2\partial_{x_1}\D_q\D_k^1v|\mathcal{R}_1^2\D_q\D_k^1u)\\
&\quad=\mathcal{Y}_{q,k},
 \end{split}
\end{equation}
with
\begin{equation*}
 \begin{split}
\mathcal{Y}_{q,k}&\overset{def}=2(\mathcal{R}_1^2\D_q\D_k^1L|\mathcal{R}_1^2\D_q\D_k^1u)-2(\mathcal{R}_1^2\D_q\D_k^1(u\cdot\nabla u)|\mathcal{R}_1^2\D_q\D_k^1u)\\
&\quad+(\D_q\D_k^1\partial_{x_1}M|\D_q\D_k^1\partial_{x_1}v)-(\partial_{x_1}\D_q\D_k^1(u\cdot\nabla v)|\D_q\D_k^1\partial_{x_1}v)\\
&\quad+(\D_q\D_k^1\mathcal{R}_1^2L|\D_q\D_k^1\partial_{x_1}v)+(\D_q\D_k^1\partial_{x_1}M|\D_q\D_k^1\mathcal{R}_1^2v)\\
&\quad-(\mathcal{R}_1^2\D_q\D_k^1(u\cdot\nabla u)|\D_q\D_k^1\partial_{x_1}v)-(\partial_{x_1}\D_q\D_k^1(u\cdot\nabla v)|\mathcal{R}_1^2\D_q\D_k^1u)\\
 \end{split}
\end{equation*}

Observe that
\begin{equation}\label{a12a}
f_{q,k}^2\approx \|\D_q\D_k^1\mathcal{R}_1^2u\|_{L^2}^2+\|\D_q\D_k^1\partial_{x_1}v\|_{L^2}^2.
\end{equation} 

Since
\begin{equation*}
 \begin{split}
2|(\mathcal{R}_1^2\partial_{x_1}\D_q\D_k^1v|\mathcal{R}_1^2\D_q\D_k^1u)|\le \|\mathcal{R}_1\partial_{x_1}\D_q\D_k^1v\|_{L^2}\|\mathcal{R}_1\partial_{x_1}\D_q\D_k^1u\|_{L^2},
 \end{split}
\end{equation*}
there holds
\begin{equation*}
 \begin{split}
&\|\mathcal{R}_1\D_q\D_k^1 \partial_{x_1}u\|_{L^2}^2+\|\mathcal{R}_1\D_q\D_k^1\partial_{x_1}v\|_{L^2}^2-2(\mathcal{R}_1^2\partial_{x_1}\D_q\D_k^1v|\mathcal{R}_1^2\D_q\D_k^1u)\\
&\quad\approx \|\mathcal{R}_1\D_q\D_k^1 \partial_{x_1}u\|_{L^2}^2+\|\mathcal{R}_1\D_q\D_k^1\partial_{x_1}v\|_{L^2}^2\\
&\quad\ge C2^{2k-2q}\Big(\|\mathcal{R}^2_1\D_q\D_k^1 u\|_{L^2}^2+\|\D_q\D_k^1\partial_{x_1}v\|_{L^2}^2\Big).  
 \end{split}
\end{equation*}

On the other hand, one deduces from Lemma \ref{cl} that
\begin{equation}\label{a12}
|\mathcal{Y}_{q,k}|\lesssim f_{q,k}\Big(\|\D_q\D_k^1\mathcal{R}_1^2L\|_{L^2}+\|\D_q\D_k^1\partial_{x_1}M\|_{L^2}+\alpha_{q,k}2^{2k-2q}\|u\|_{\hat{B}^2}(\|v\|_{\tilde{B}^{0,1}}+\|u\|_{\hat{B}^0})\Big).
\end{equation}

Combining \eqref{a11} and \eqref{a12}, one has
\begin{equation*}
 \begin{split}
&\f{d}{dt}f_{q,k}^2+\kappa 2^{2k-2q} \Big(\|\D_q\D_k^1\mathcal{R}_1^2u\|_{L^2}^2+\|\D_q\D_k^1\partial_{x_1}v\|_{L^2}^2\Big)\\
&\quad\lesssim \Big(\|\D_q\D_k^1\mathcal{R}_1^2L\|_{L^2}+\|\D_q\D_k^1\partial_{x_1}M\|_{L^2}+\alpha_{q,k}2^{2k-2q}\|u\|_{\hat{B}^2}(\|u\|_{\hat{B}^0}+\|v\|_{\tilde{B}^{0,1}})\Big)f_{q,k}
 \end{split}
\end{equation*} for some positive constant $\kappa$.

According to the equivalence \eqref{a12a}, dividing the equation above by $f_{q,k}$ and then multiplying the resulting inequality by $2^{2q-2k}$, one obtains
\begin{equation}\label{a14}
 \begin{split}
&\f{d}{dt}\Big(\|\D_q\D_k^1u\|_{L^2}+2^{2q-k}\|\D_q\D_k^1v\|_{L^2}\Big)+\kappa \Big(\|\D_q\D_k^1\mathcal{R}_1^2u\|_{L^2}+\|\D_q\D_k^1\partial_{x_1}v\|_{L^2}\Big)\\
&\quad\lesssim \|\D_q\D_k^1L\|_{L^2}+2^{2q-k}\|\D_q\D_k^1M\|_{L^2}+\alpha_{q,k}\|u\|_{\hat{B}^2}(\|u\|_{\hat{B}^0}+\|v\|_{\tilde{B}^{0,1}}).
 \end{split}
\end{equation}
Summing \eqref{a14} over $q,k\in\mathbb{Z}$ and integrating over $t\in \R$, it follows
\begin{equation}\label{a15}
\begin{split}
&\sup_{0\le \tau<\infty}\sum_{k+1<2q}\Big(\|\D_q\D_k^1u(\tau)\|_{L^2}+2^{2q-k}\|\D_q\D_k^1v(\tau)\|_{L^2}\Big)+\sum_{k+1<2q}\|\D_q\D_k^1\partial_{x_1}v\|_{L^1(L^2)}\\
&\quad\lesssim \|u(0)\|_{\hat{B}^0}+\|v(0)\|_{\tilde{B}^{0,1}}+\|L\|_{L^1(\hat{B}^0)}+\|M\|_{L^1(\tilde{B}^{0,1})}\\
&\qquad+\|u\|_{L^1(\hat{B}^2)}(\|u\|_{L^\infty(\hat{B}^0)}+\|v\|_{L^\infty(\tilde{B}^{0,1})}).
\end{split}
\end{equation}

\texttt{Step 2: Smoothing Effect.} We are going to use the parabolicity of $u$ to improve the dissipation of $u$. According to \eqref{a10}, it is only left to handle the case $k+1< 2q$.
Going back to the equation \eqref{a01a}, and regarding now the term $\partial_{x_1}v$ as an external term, one has
\begin{equation*}
\begin{split}
&\f12\f{d}{dt}\|\D_q\D_k^1u\|_{L^2}^2+2^{2q}\|\D_q\D_k^1u\|_{L^2}+(\D_q\D_k^1(u\cdot\nabla u)|\D_q\D_k^1u)\\
&\quad\lesssim \Big(\|\D_q\D_k^1\partial_{x_1}v\|_{L^2}+\|\D_q\D_k^1L\|_{L^2}\Big)\|\D_q\D_k^1u\|_{L^2}, 
\end{split}
\end{equation*}
and hence
\begin{equation}\label{a16}
\begin{split}
&\sum_{k+1<2q}2^{2q}\|\D_q\D_k^1u\|_{L^1(L^2)}\\
&\quad\lesssim \|u(0)\|_{\hat{B}^0}+\|L\|_{L^1(\hat{B}^0)}+\sum_{k+1<2q}\|\D_q\D_k^1\partial_{x_1}v\|_{L^1(L^2)}+\|u\|_{\hat{B}^2}\|u\|_{\hat{B}^0}.
\end{split}
\end{equation}
The desired estimate \eqref{a2a} follows from \eqref{a10}, \eqref{a15} and \eqref{a16}.
\end{proof}

Taking divergence in the momentum equation of \eqref{e2} yields
\begin{equation}\label{a16b}
\D\left(P+\f12|{\bf B}|^2\right)=\Dv\left(\H\cdot\nabla\H-\u\cdot\nabla\u\right),
\end{equation}
which means that $P+\f12|{\bf B}|^2$ is essentially a quadratic term in $(\u,\H)$ and hence can be regarded as an external term in \eqref{11} to get
\begin{subequations}\label{a16a}
\begin{align}
&\partial_t\u+\u\cdot\nabla\u-\D\u-\partial_{x_1}\H=-\nabla\left(P+\f12|{\bf B}|^2\right)+\mathcal{L};\\
&\partial_t\H+\u\cdot\nabla\H-\partial_{x_1}\u=\mathcal{M}
\end{align}
\end{subequations}
with
$$\mathcal{L}\overset{def}=\H\cdot\nabla\H\quad\textrm{and}\quad \mathcal{M}\overset{def}=\H\cdot\nabla\u.$$
From \eqref{a16b}, one has
$$\left\|\nabla \left(P+\f12|{\bf B}|^2\right)\right\|_{\hat{B}^0}\lesssim \|\mathcal{L}\|_{\hat{B}^0}.$$
Applying Proposition \ref{p00} to the system \eqref{a16a} yields
\begin{Corollary}\label{c1}
For solutions $(\u, \H)$ of \eqref{e2}, there holds true
\begin{equation*}
\begin{split}
&\|\u\|_{L^\infty(\hat{B}^0)}+\|\H\|_{L^\infty(\tilde{B}^{0,1})}+\|\u\|_{L^1(\hat{B}^2)}\\
&\qquad+\int_0^\infty\left(\sum_{k+1\ge 2q}2^{2q}\|\D_q\D_k^1\H\|_{L^2}+\sum_{k+1<2q}\|\D_q\D_k^1\partial_{x_1}\H\|_{L^2}\right)dt\\
&\quad\lesssim \|\u_0\|_{\hat{B}^0}+\|\H_0\|_{\tilde{B}^{0,1}}+\|\mathcal{L}\|_{L^1(\hat{B}^0)}+\|\mathcal{M}\|_{L^1(\tilde{B}^{0,1})}\\
&\qquad+\|\u\|_{L^1(\hat{B}^2)}(\|\u\|_{L^\infty(\hat{B}^0)}+\|\H\|_{L^\infty(\tilde{B}^{0,1})}).
\end{split}
\end{equation*}
\end{Corollary}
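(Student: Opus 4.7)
The plan is to recognize that the system \eqref{a16a} is precisely the abstract structure \eqref{a01} treated in Proposition \ref{p00}, with the identifications $u=\u$, $v=\H$, $L = -\nabla\bigl(P+\tfrac12|{\bf B}|^2\bigr)+\mathcal{L}$, and $M=\mathcal{M}$. A direct application of Proposition \ref{p00} then bounds every quantity on the left-hand side of the Corollary by
$$\|\u_0\|_{\hat{B}^0}+\|\H_0\|_{\tilde{B}^{0,1}}+\|L\|_{L^1(\hat{B}^0)}+\|M\|_{L^1(\tilde{B}^{0,1})}+\|\u\|_{L^1(\hat{B}^2)}\bigl(\|\u\|_{L^\infty(\hat{B}^0)}+\|\H\|_{L^\infty(\tilde{B}^{0,1})}\bigr).$$
So the only thing to verify is that the pressure contribution inside $L$ can be absorbed, namely $\|L\|_{L^1(\hat{B}^0)}\lesssim \|\mathcal{L}\|_{L^1(\hat{B}^0)}+\|\u\|_{L^1(\hat{B}^2)}\|\u\|_{L^\infty(\hat{B}^0)}$, after which everything matches the right-hand side of Corollary \ref{c1}.

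The reduction of the pressure term is the only nontrivial step, and it is done via the elliptic identity \eqref{a16b}. Solving for the total pressure gradient gives
$$\nabla\bigl(P+\tfrac12|{\bf B}|^2\bigr)=\nabla\D^{-1}\Dv\bigl(\mathcal{L}-\u\cdot\nabla\u\bigr),$$
where $\nabla\D^{-1}\Dv$ is a zero-order Fourier multiplier (a composition of Riesz transforms). Since $\hat{B}^0$ is defined by dyadic localization in both $|\xi|$ and $\xi_1$, such multipliers act boundedly on $\hat{B}^0$, and hence
$$\bigl\|\nabla(P+\tfrac12|{\bf B}|^2)\bigr\|_{\hat{B}^0}\lesssim \|\mathcal{L}\|_{\hat{B}^0}+\|\u\cdot\nabla\u\|_{\hat{B}^0}.$$
For the last term I would use Proposition \ref{p2} (say with $s=1$, $t=0$ applied to $\u$ and $\nabla\u$) together with the interpolation $\|\u\|_{\hat{B}^1}^2\lesssim \|\u\|_{\hat{B}^0}\|\u\|_{\hat{B}^2}$ to obtain $\|\u\cdot\nabla\u\|_{\hat{B}^0}\lesssim \|\u\|_{\hat{B}^0}\|\u\|_{\hat{B}^2}$. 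Integrating in time and applying Hölder's inequality yields $\|\u\cdot\nabla\u\|_{L^1(\hat{B}^0)}\lesssim \|\u\|_{L^1(\hat{B}^2)}\|\u\|_{L^\infty(\hat{B}^0)}$, which is exactly the structure of the already-present absorption term on the right side of Corollary \ref{c1}.

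There is no genuine obstacle, as the entire statement is encoded in Proposition \ref{p00} once the nonlocal pressure is rewritten by taking the divergence of the momentum equation. The mildest subtlety is merely checking that the homogeneous multiplier $\nabla\D^{-1}\Dv$ is bounded on the anisotropic space $\hat{B}^0$, which is immediate from the dyadic characterization. The term $\|M\|_{L^1(\tilde{B}^{0,1})}=\|\mathcal{M}\|_{L^1(\tilde{B}^{0,1})}$ is simply carried along unchanged from Proposition \ref{p00}, as it is kept as part of the forcing data in the final statement of Corollary \ref{c1}.
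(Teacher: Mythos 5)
Your proposal follows the paper's own route: take the divergence of the momentum equation to obtain \eqref{a16b}, regard the total pressure together with $\mathcal{L}=\H\cdot\nabla\H$ and $\mathcal{M}=\H\cdot\nabla\u$ as external forces in the system \eqref{a16a}, and apply Proposition \ref{p00}. The only (minor) difference is that you treat the $\u\cdot\nabla\u$ contribution to the pressure explicitly, via boundedness of the zero-order multiplier $\nabla\D^{-1}\Dv$ on $\hat{B}^0$, Proposition \ref{p2} and the interpolation $\|\u\|_{\hat{B}^1}^2\lesssim\|\u\|_{\hat{B}^0}\|\u\|_{\hat{B}^2}$, whereas the paper simply records $\|\nabla(P+\frac12|{\bf B}|^2)\|_{\hat{B}^0}\lesssim\|\mathcal{L}\|_{\hat{B}^0}$; your extra term is correctly absorbed into the quadratic term $\|\u\|_{L^1(\hat{B}^2)}\bigl(\|\u\|_{L^\infty(\hat{B}^0)}+\|\H\|_{L^\infty(\tilde{B}^{0,1})}\bigr)$ already present on the right-hand side.
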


\subsection{Dissipation of $\H$}
Following line by line as \eqref{17} and \eqref{18}, using $\Dv\H=0$, one has
\begin{equation}\label{322}
 \begin{split}
&\|\Lambda\D_q\D_k^1\H\|_{L^2}^2+\left\|\Lambda\D_q\D_k^1\H_1\right\|_{L^2}^2\\
&\quad=-(\D_q\D_k^1\partial_{x_1}\H|\D_q\D_k^1\Dv \mathcal{A})
-\sum_{j=1}^2\left(\D_q\D_k^1\f{\partial\det\mathcal{A}}{\partial x_j}|\D_q\D_k^1\f{\partial^2 \alpha_2}{\partial x_2\partial x_j}\right).
 \end{split}
\end{equation}
Since $\mathcal{A}_{22}=\f{\partial \alpha_2}{\partial x_2}=\H_1$, it follows from \eqref{322} that
\begin{equation}\label{323}
 \begin{split}
&\|\Lambda\D_q\D_k^1\H\|_{L^2}^2+\left\|\Lambda\D_q\D_k^1\H_1\right\|_{L^2}^2\\
&\quad\lesssim \|\D_q\D_k^1\partial_{x_1}\H\|_{L^2}\|\D_q\D_k^1\Dv \mathcal{A}\|_{L^2}+\left\|\Lambda\D_q\D_k^1\det\mathcal{A}\right\|^2_{L^2}.
 \end{split}
\end{equation}
We claim now that
\begin{Lemma}\label{l2}
Assume that $\sup_{t\in[0,T]}\|\mathcal{A}(t)\|_{\hat{B}^{1}}\le \epsilon$ for $0\le T\le\infty$ and sufficiently small $\epsilon$. Then for solutions $(\u,\H)$ of \eqref{e2}, there holds
\begin{equation}\label{327}
 \begin{split}
\left\|\H\right\|_{L_T^2(\hat{B}^1)}^2&\lesssim \|\H\|_{L^\infty_T(\tilde{B}^{0,1})}\int_0^T\left(\sum_{k+1\ge 2q}2^{2q}\|\D_q\D_k^1\H\|_{L^2}\right)dt\\
&\qquad+\|\mathcal{A}\|_{L_T^\infty(\hat{B}^{1})}\int_0^T\left(\sum_{k+1<2q}\|\D_q\D_k^1\partial_{x_1}\H\|_{L^2}\right)dt.
 \end{split}
\end{equation}
\end{Lemma}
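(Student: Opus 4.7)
The plan is to localize \eqref{323} dyadically, split into the two frequency regimes that define $\tilde{B}^{0,1}$, and apply Cauchy-Schwarz differently on each. Write $h_{q,k}:=\|\D_q\D_k^1\H\|_{L^2}$, $a_{q,k}:=\|\D_q\D_k^1\mathcal{A}\|_{L^2}$, and $g_{q,k}:=\|\D_q\D_k^1\det\mathcal{A}\|_{L^2}$. Bernstein's inequality turns \eqref{323} into
\[
2^{2q}h_{q,k}^{2}\lesssim 2^{q}a_{q,k}\,\|\D_q\D_k^1\partial_{x_1}\H\|_{L^2}+2^{2q}g_{q,k}^{2}.
\]
Decompose $\|\H\|_{\hat{B}^1}=N_1+N_2$ with $N_1:=\sum_{k+1\ge 2q}2^q h_{q,k}$ and $N_2:=\sum_{k+1<2q}2^q h_{q,k}$, so that $\|\H\|_{L^2_T(\hat{B}^1)}^{2}\le 2\int_0^T(N_1^{2}+N_2^{2})\,dt$, and aim to bound each $N_j^2$ by one of the two terms on the right of \eqref{327}.

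For $N_1$ (low frequencies), I do not even need the pointwise estimate: a direct Cauchy-Schwarz on the summation index gives
\[
N_1^{2}\le \Big(\sum_{k+1\ge 2q}h_{q,k}\Big)\Big(\sum_{k+1\ge 2q}2^{2q}h_{q,k}\Big)\le \|\H\|_{\tilde{B}^{0,1}}\sum_{k+1\ge 2q}2^{2q}h_{q,k},
\]
because the first factor is exactly the low-frequency constituent of the $\tilde{B}^{0,1}$-norm. Time integration then yields the first term of \eqref{327}. For $N_2$ (high frequencies), I instead square-root the displayed pointwise inequality, sum over $\{k+1<2q\}$, and Cauchy-Schwarz the cross term:
\[
N_2\lesssim \Big(\sum_{k+1<2q}2^{q}a_{q,k}\Big)^{1/2}\Big(\sum_{k+1<2q}\|\D_q\D_k^1\partial_{x_1}\H\|_{L^2}\Big)^{1/2}+\|\det\mathcal{A}\|_{\hat{B}^1}.
\]
Using $\sum 2^{q}a_{q,k}\le \|\mathcal{A}\|_{\hat{B}^1}$, squaring, and integrating produces the second term of \eqref{327}, at the cost of a leftover $\int_0^T\|\det\mathcal{A}\|_{\hat{B}^1}^{2}\,dt$.

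The delicate step is disposing of this leftover, since $\det\mathcal{A}$ carries no obvious dissipation of its own. The rescue comes from \eqref{16a}, which allows the rewriting
\[
\det\mathcal{A}=\mathcal{A}_{11}\mathcal{A}_{22}-\mathcal{A}_{12}\mathcal{A}_{21}=\mathcal{A}_{11}\H_1+\mathcal{A}_{12}\H_2,
\]
exposing a hidden factor of $\H$. Proposition \ref{p2} with $s=t=1$ then gives $\|\det\mathcal{A}\|_{\hat{B}^1}\lesssim \|\mathcal{A}\|_{\hat{B}^1}\|\H\|_{\hat{B}^1}$, so
\[
\int_0^T\|\det\mathcal{A}\|_{\hat{B}^1}^{2}\,dt\lesssim \|\mathcal{A}\|_{L^\infty_T(\hat{B}^1)}^{2}\|\H\|_{L^2_T(\hat{B}^1)}^{2}\le \epsilon^{2}\|\H\|_{L^2_T(\hat{B}^1)}^{2},
\]
which is absorbed into the left-hand side for $\epsilon$ small, yielding \eqref{327}. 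The smallness hypothesis on $\|\mathcal{A}\|_{\hat{B}^1}$ is indispensable precisely at this absorption, turning the otherwise nondissipative determinant into a tolerable perturbation.
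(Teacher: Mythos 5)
Your proposal is correct and follows essentially the same route as the paper's proof: split $\|\H\|_{\hat B^1}$ into the regimes $k+1\ge 2q$ and $k+1<2q$, apply Cauchy--Schwarz for series in each (with the square-root trick on the localized inequality \eqref{323} in the high-frequency part), and control $\det\mathcal{A}$ through the identity $\det\mathcal{A}=\mathcal{A}_{11}\H_1+\mathcal{A}_{12}\H_2$ and Proposition \ref{p2}, absorbing it by the smallness of $\|\mathcal{A}\|_{L^\infty_T(\hat B^1)}$. Your bookkeeping with $h_{q,k}$, $a_{q,k}$, $g_{q,k}$ is only a notational variant of the paper's estimates \eqref{324a}--\eqref{326}.
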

\begin{proof}
The definition of $\hat{B}^1$ gives
\begin{equation}\label{324a}
\|\H\|_{L^2_T(\hat{B}^1)}^2+\|\H_1\|_{L_T^2(\hat{B}^1)}^2=\int_0^T\left(\sum_{q,k\in\mathbb{Z}}2^q\Big(\|\D_q\D_k^1\H\|_{L^2}+\|\D_q\D_k^1\H_1\|_{L^2}\Big)\right)^2dt.
\end{equation}
We consider two cases according to the decomposition of frequencies.

\texttt{Case 1: $k+1\ge 2q$.} Holder's inequality for series implies
\begin{equation}\label{324c}
 \begin{split}
&\int_0^T\left(\sum_{k+1\ge 2q}2^q\|\D_q\D_k^1\H\|_{L^2}\right)^2dt=\int_0^T\left(\sum_{k+1\ge 2q}2^{q}\|\D_q\D_k^1\H\|_{L^2}^{1/2}\|\D_q\D_k^1\H\|_{L^2}^{1/2}\right)^2dt\\
&\quad\lesssim \int_0^T\left(\sum_{k+1\ge 2q}2^{2q}\|\D_q\D_k^1\H\|_{L^2}\right)\left(\sum_{k+1\ge 2q}\|\D_q\D_k^1\H\|_{L^2}\right)dt\\
&\quad\lesssim \|\H\|_{L^\infty_T(\tilde{B}^{0,1})}\int_0^T\left(\sum_{k+1\ge 2q}2^{2q}\|\D_q\D_k^1\H\|_{L^2}\right)dt.
 \end{split}
\end{equation}

\texttt{Case 2: $k+1<2q$.}
Using \eqref{323}, one has
\begin{equation}\label{324}
 \begin{split}
&\int_0^T\left(\sum_{k+1< 2q}2^q\Big(\|\D_q\D_k^1\H\|_{L^2}+\|\D_q\D_k^1\H_1\|_{L^2}\Big)\right)^2dt\\
&\qquad\lesssim \int_0^T\left(\sum_{k+1< 2q}2^{q/2}\|\D_q\D_k^1\partial_{x_1}\H\|_{L^2}^{1/2}\|\D_q\D_k^1\mathcal{A}\|_{L^2}^{1/2}\right)^2dt\\
&\quad\qquad+\int_0^T\left(\sum_{k+1< 2q}2^q\|\D_q\D_k^1\det\mathcal{A}\|_{L^2}\right)^2dt\\
&\qquad= \int_0^T\left(\sum_{k+1< 2q}2^{q/2}\|\D_q\D_k^1\partial_{x_1}\H\|_{L^2}^{1/2}\|\D_q\D_k^1\mathcal{A}\|_{L^2}^{1/2}\right)^2dt+\|\det\mathcal{A}\|_{L^2_T(\hat{B}^1)}^2.
 \end{split}
\end{equation}
For the first term in the right hand side, one has
\begin{equation}\label{324b}
 \begin{split}
 &\int_0^T\left(\sum_{k+1< 2q}2^{q/2}\|\D_q\D_k^1\partial_{x_1}\H\|_{L^2}^{1/2}\|\D_q\D_k^1\mathcal{A}\|_{L^2}^{1/2}\right)^2dt\\
&\quad \le\int_0^T\left(\sum_{k+1< 2q}\|\D_q\D_k^1\partial_{x_1}\H\|_{L^2}\right)\left(\sum_{k+1< 2q}2^q\|\D_q\D_k^1\mathcal{A}\|_{L^2}\right)dt\\
&\quad\lesssim \|\mathcal{A}\|_{L_T^\infty(\hat{B}^{1})}\int_0^T\sum_{k+1< 2q}\|\D_q\D_k^1\partial_{x_1}\H\|_{L^2}dt.
 \end{split}
\end{equation}
Substituting \eqref{324c}-\eqref{324b} to \eqref{324a} gives
\begin{equation}\label{325}
\begin{split}
&\|\H\|_{L^2(B^1)}^2+\|\H_1\|_{L_T^2(B^1)}^2-C\|\det\mathcal{A}\|_{L^2_T(B^1)}^2\\
&\quad\lesssim \|\H\|_{L^\infty_T(\tilde{B}^{0,1})}\int_0^T\left(\sum_{k+1\ge 2q}2^{2q}\|\D_q\D_k^1\H\|_{L^2}\right)dt\\
&\qquad+\|\mathcal{A}\|_{L_T^\infty(\hat{B}^{1})}\int_0^T\left(\sum_{k+1<2q}\|\D_q\D_k^1\partial_{x_1}\H\|_{L^2}\right)dt. 
\end{split}
\end{equation}

The identity \eqref{16a} implies
\begin{equation*}
 \begin{split}
\det\mathcal{A}=\mathcal{A}_{11}\mathcal{A}_{22}-\mathcal{A}_{12}\mathcal{A}_{21}=\mathcal{A}_{11}\H_1+\mathcal{A}_{12}\H_2,
 \end{split}
\end{equation*}
and hence Proposition \ref{p2} yields
\begin{equation}\label{326}
\begin{split}
 \|\det\mathcal{A}\|_{L^2_T(\hat{B}^1)}\lesssim \|\mathcal{A}\|_{L_T^\infty(\hat{B}^1)}\|\H\|_{L^2_T(\hat{B}^1)}.
\end{split}
\end{equation}
The desired estimate \eqref{327} then follows from \eqref{325}, \eqref{326}, and the assumption \\$\sup_{t\in[0,T]}\|\mathcal{A}(t)\|_{\hat{B}^{1}}\le \epsilon$ with $C\epsilon^2\le 1/2$.
\end{proof}

\bigskip\bigskip

\section{Proof of Theorem \ref{MT}}

This section aims at the proof of Theorem \ref{MT}, and we focus on the existence part of \ref{MT}. In order to show that a local solution can be extended to be a global one, we only need to prove 
the uniform estimate \eqref{ES}. For this purpose, we denote
$$X(t)=\|\mathcal{A}\|_{L^\infty_t(\hat{B}^{1})}+\|(\u,\H)\|_{\mathfrak{B}_t^1},$$
and we are going to show
\begin{equation}\tag{$\mathfrak{G}$}
X(t)\le C\Big(X(0)+X^2(t)\Big).
\end{equation}
Once ($\mathfrak{G}$) was shown, the existence part of Theorem \ref{MT} is done since by the continuity of $X(t)$ and the smallness of the initial data, there exists a constant $\Gamma$ such that
$$X(t)\le \Gamma X(0),$$ and hence local solutions can be extended.

The rest of this section is devoted to the proof of ($\mathfrak{G}$). To begin with, we establish the estimates for $\|\mathcal{A}\|_{L^\infty_t(\hat{B}^{1})}$.
\begin{Lemma}\label{l6a}
\begin{equation*}
\|\mathcal{A}\|_{L^\infty_t(\hat{B}^{1})}\lesssim X(0)+X(t)^2.
\end{equation*}
\end{Lemma}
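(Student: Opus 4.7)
From \eqref{17d} the perturbation $\mathcal{A}=A-I$ satisfies
\[\partial_t\mathcal{A}+\u\cdot\nabla\mathcal{A}=-\nabla\u-\mathcal{A}\nabla\u,\]
a transport equation with divergence-free drift $\u$. I would derive a dyadic $\hat{B}^1$--transport estimate by applying $\Lambda\D_q\D_k^1$, taking the $L^2$ inner product with $\Lambda\D_q\D_k^1\mathcal{A}$, and using Lemma \ref{cl} with $G(D)=\Lambda$ (so $m=1$, $n=0$) to control the advection term as a commutator. After integrating in time and summing over $(q,k)$,
\[\|\mathcal{A}(t)\|_{\hat{B}^1}\lesssim\|\mathcal{A}_0\|_{\hat{B}^1}+\int_0^t\bigl(\|\u\|_{\hat{B}^2}+\|\mathcal{A}\nabla\u\|_{\hat{B}^1}\bigr)d\tau+\int_0^t\|\u\|_{\hat{B}^2}\|\mathcal{A}\|_{\hat{B}^1}\,d\tau.\]

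Proposition \ref{p2} with $s=t=1$ gives $\|\mathcal{A}\nabla\u\|_{\hat{B}^1}\lesssim\|\mathcal{A}\|_{\hat{B}^1}\|\u\|_{\hat{B}^2}$, so both the quadratic source and the Gronwall-type tail are bounded by $\|\mathcal{A}\|_{L^\infty_t(\hat{B}^1)}\|\u\|_{L^1_t(\hat{B}^2)}\le X(t)^2$. Since $\|\u\|_{L^1_t(\hat{B}^2)}\le X(t)\ll 1$ in the bootstrap regime, these can be absorbed into the left hand side, yielding the intermediate estimate
\[\|\mathcal{A}\|_{L^\infty_t(\hat{B}^1)}\lesssim\|\mathcal{A}_0\|_{\hat{B}^1}+\|\u\|_{L^1_t(\hat{B}^2)}+X(t)^2.\]

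The decisive remaining step is that the seemingly linear quantity $\|\u\|_{L^1_t(\hat{B}^2)}$ is in fact controlled by $X(0)+X(t)^2$, which is exactly Corollary \ref{c1}. To turn its right hand side into a purely quadratic statement one uses Proposition \ref{p2} (with $s=1$, $t=0$) to bound $\|\mathcal{L}\|_{L^1_t(\hat{B}^0)}=\|\H\cdot\nabla\H\|_{L^1_t(\hat{B}^0)}\lesssim\|\H\|_{L^2_t(\hat{B}^1)}^2\le X(t)^2$, and Lemma \ref{le} to bound $\|\mathcal{M}\|_{L^1_t(\tilde{B}^{0,1})}=\|\H\cdot\nabla\u\|_{L^1_t(\tilde{B}^{0,1})}\lesssim\|\H\|_{L^\infty_t(\tilde{B}^{0,1})}\|\u\|_{L^1_t(\hat{B}^2)}\le X(t)^2$. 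After absorbing the residual $\|\u\|_{L^1_t(\hat{B}^2)}X(t)$ term on the right of Corollary \ref{c1} for small $X(t)$, one obtains $\|\u\|_{L^1_t(\hat{B}^2)}\lesssim X(0)+X(t)^2$. Substituting this back into the intermediate estimate yields $\|\mathcal{A}\|_{L^\infty_t(\hat{B}^1)}\lesssim X(0)+X(t)^2$, as claimed.

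The principal subtlety is recognizing that the bare transport estimate for $\mathcal{A}$ only produces a linear bound $X(0)+X(t)$, which would be insufficient to close the bootstrap $(\mathfrak{G})$. The quadratic refinement requires feeding Corollary \ref{c1} back into the source term $\nabla\u$, thereby replacing the apparent linear dependence on $\u$ by a genuinely quadratic smallness; the matching of weights at the commutator step (via the choice $G(D)=\Lambda$) is the secondary technical point one has to handle carefully.
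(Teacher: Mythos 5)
Your proposal is correct, and its core — the transport equation from \eqref{17d}, the dyadic energy estimate with $G(D)=\Lambda$ via Lemma \ref{cl}, and the product law for $\mathcal{N}=-\mathcal{A}\nabla\u$ — is exactly the paper's argument leading to \eqref{65b}. Where you go beyond the paper's write-up is in the treatment of the linear term $\|\nabla\u\|_{L^1_t(\hat{B}^{1})}\approx\|\u\|_{L^1_t(\hat{B}^{2})}$: the paper's proof passes directly from \eqref{65b} to $\|\mathcal{A}(t)\|_{\hat{B}^{1}}\lesssim\|\mathcal{A}_0\|_{\hat{B}^{1}}+X(t)^2$ without comment, whereas you correctly observe that this term is only $O(X(t))$ a priori and must be converted into $X(0)+X(t)^2$ by feeding Corollary \ref{c1} together with the bounds on $\mathcal{L}$ (Proposition \ref{p2}) and $\mathcal{M}$ (Lemma \ref{le}) back into it — the same estimates the paper later performs in Lemma \ref{l6}. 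This step is genuinely needed for the lemma as stated, since a bound of the form $X(0)+X(t)+X(t)^2$ would not close $(\mathfrak{G})$, so your version is the more complete one. One small simplification: the residual term $\|\u\|_{L^1_t(\hat{B}^{2})}\bigl(\|\u\|_{L^\infty_t(\hat{B}^{0})}+\|\H\|_{L^\infty_t(\tilde{B}^{0,1})}\bigr)$ in Corollary \ref{c1} is already bounded by $X(t)^2$ outright, so no absorption or smallness of $X(t)$ is required there; avoiding that absorption keeps the inequality $(\mathfrak{G})$ unconditional, which is what the subsequent continuity argument needs.
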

\begin{proof} 
From \eqref{17d}, the function $\mathcal{A}$ satisfies a tranport equation
\begin{equation*}
 \partial_t\mathcal{A}+\u\cdot\nabla\mathcal{A}+\nabla\u=\mathcal{N}
\end{equation*}
with $\mathcal{N}=-\mathcal{A}\nabla\u$,
and hence according to Lemma \ref{cl}, there holds 
\begin{equation}\label{65}
 \begin{split}
\f12\f{d}{dt}\|\Lambda\D_q\D_k^1\mathcal{A}\|_{L^2}^2&=-(\Lambda\D_q\D_k^1\nabla\u|\Lambda\D_q\D_k^1\mathcal{A})-(\Lambda\D_q\D_k^1(\u\cdot\nabla\mathcal{A})|\Lambda\D_q\D_k^1\mathcal{A})\\
&\quad+(\Lambda\D_q\D_k^1\mathcal{N}|\Lambda\D_q\D_k^1\mathcal{A})\\
&\lesssim \|\Lambda\D_q\D_k^1\mathcal{A}\|_{L^2}\Big(\|\Lambda\D_q\D_k^1\nabla\u\|_{L^2}+\|\Lambda\D_q\D_k^1\mathcal{N}\|_{L^2}\\
&\quad+\alpha_{q,k}\|\u\|_{\hat{B}^2}\|\mathcal{A}\|_{\hat{B}^{1}}\Big).
\end{split}
\end{equation}
Summing \eqref{65} over $q,k\in\mathbb{Z}$ and integrating over $t$, one obtains
\begin{equation}\label{65b}
 \begin{split}
\sup_{t\ge 0}\|\mathcal{A}(t)\|_{\hat{B}^1}&\lesssim \|\mathcal{A}_0\|_{\hat{B}^1}
+\|\nabla\u\|_{L^1(\hat{B}^{1})}+\|\mathcal{N}\|_{L^1(\hat{B}^{1})}+\|\u\|_{L^1(\hat{B}^2)}\|\mathcal{A}\|_{L^\infty(\hat{B}^{1})}.
 \end{split}
\end{equation}
Using Lemma \ref{le}, one has
\begin{equation*}
 \begin{split}
\|\mathcal{N}\|_{L^1(\hat{B}^{1})}&\lesssim \|\mathcal{A}\|_{L^\infty(\hat{B}^{1})}\|\nabla\u\|_{L^1(\hat{B}^1)}\lesssim \|\mathcal{A}\|_{L^\infty(\hat{B}^{1})}\|\u\|_{L^1(\hat{B}^2)}\lesssim X(t)^2.
 \end{split}
\end{equation*}
Substituting this into \eqref{65b}, one has
\begin{equation*}
 \begin{split}
\|\mathcal{A}(t)\|_{\hat{B}^{1}}&\lesssim \|\mathcal{A}_0\|_{\hat{B}^{1}}+X(t)^2.
 \end{split}
\end{equation*}
Taking the $L^\infty$ norm over time gives the desired estimate.
\end{proof}

Next we turn to the estimate for $\|(\u,\H)\|_{\mathfrak{B}_t^1}$.
\begin{Lemma}\label{l6}
\begin{equation*}
 \begin{split}
 \|(\u,\H)\|_{\mathfrak{B}_t^1}\lesssim X(0)+X(t)^2. 
 \end{split}
\end{equation*}
\end{Lemma}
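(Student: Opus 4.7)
The plan is to combine Corollary \ref{c1}, which gives parabolic dissipation for $\u$ and \emph{partial} dissipation for $\H$, with Lemma \ref{l2}, which converts that partial information into the full $L^2_t(\hat{B}^1)$-norm of $\H$, while bounding the nonlinear source terms of \eqref{a16a} via the product estimates of Proposition \ref{p2} and Lemma \ref{le}. The two dissipation mechanisms must be chained in this order, and a small amount of absorption at the end promotes the output into the desired clean inequality.

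\textbf{Step 1: bounding the output of Corollary \ref{c1}.} The source terms of \eqref{a16a} are $\mathcal{L}=\H\cdot\nabla\H$ and $\mathcal{M}=\H\cdot\nabla\u$. By Proposition \ref{p2} (applied with $s=1$, $t=0$) and the Cauchy--Schwarz inequality in time,
\begin{equation*}
\|\mathcal{L}\|_{L^1_t(\hat{B}^0)}\lesssim\|\H\|_{L^2_t(\hat{B}^1)}^2\le X(t)^2,
\end{equation*}
while Lemma \ref{le} gives
\begin{equation*}
\|\mathcal{M}\|_{L^1_t(\tilde{B}^{0,1})}\lesssim\|\H\|_{L^\infty_t(\tilde{B}^{0,1})}\,\|\u\|_{L^1_t(\hat{B}^2)}\le X(t)^2.
\end{equation*}
The remaining product term on the right-hand side of Corollary \ref{c1} is also $\le X(t)^2$, so writing $I(t)$ for the integral on the left-hand side of Corollary \ref{c1} containing the low-frequency and $\partial_{x_1}$ pieces of $\H$, I obtain
\begin{equation*}
\|\u\|_{L^\infty_t(\hat{B}^0)}+\|\H\|_{L^\infty_t(\tilde{B}^{0,1})}+\|\u\|_{L^1_t(\hat{B}^2)}+I(t)\lesssim X(0)+X(t)^2.
\end{equation*}

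\textbf{Step 2: recovering $\|\H\|_{L^2_t(\hat{B}^1)}$ and absorbing.} Feeding the bound $I(t)\lesssim X(0)+X(t)^2$ into Lemma \ref{l2} and estimating the outer prefactors $\|\H\|_{L^\infty_t(\tilde{B}^{0,1})}$ and $\|\mathcal{A}\|_{L^\infty_t(\hat{B}^1)}$ by $X(t)$ yields
\begin{equation*}
\|\H\|_{L^2_t(\hat{B}^1)}^2\lesssim X(t)\bigl(X(0)+X(t)^2\bigr).
\end{equation*}
Young's inequality $\sqrt{ab}\le\epsilon a+C_\epsilon b$ then gives $\|\H\|_{L^2_t(\hat{B}^1)}\le \epsilon X(t)+C_\epsilon(X(0)+X(t)^2)$. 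Since Step 1 together with Lemma \ref{l6a} bounds every component of $X(t)$ \emph{other than} $\|\H\|_{L^2_t(\hat{B}^1)}$ by $X(0)+X(t)^2$, the term $\epsilon X(t)$ on the right contributes at most $\epsilon\|\H\|_{L^2_t(\hat{B}^1)}+C_\epsilon(X(0)+X(t)^2)$; choosing $\epsilon$ small enough absorbs $\epsilon\|\H\|_{L^2_t(\hat{B}^1)}$ into the left-hand side and yields $\|\H\|_{L^2_t(\hat{B}^1)}\lesssim X(0)+X(t)^2$. Combined with Step 1, this closes the estimate.

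\textbf{Main obstacle.} The delicate point is that Lemma \ref{l2} only bounds $\|\H\|_{L^2_t(\hat{B}^1)}^2$ by $X(t)\cdot(\text{parabolic output})$; taking a square root produces the sub-quadratic terms $\sqrt{X(t)X(0)}$ and $X(t)^{3/2}$, both of which a priori could compete with $X(t)$ on the left and therefore spoil a bootstrap of the form $X(t)\lesssim X(0)+X(t)^2$. The absorption trick of Step 2 succeeds only because the parabolic estimate of Corollary \ref{c1} has \emph{already} bootstrapped all other components of $X(t)$; this is the precise structural reason why the ``parabolic plus deformation-gradient'' decomposition of the dissipation for $\H$ is essential here.
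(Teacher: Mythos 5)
Your proof is correct and follows essentially the paper's own route: Corollary \ref{c1} with the product estimates for $\mathcal{L}=\H\cdot\nabla\H$ and $\mathcal{M}=\H\cdot\nabla\u$ (the paper's \eqref{62} and \eqref{64}), combined with Lemma \ref{l6a}. The only difference is that you spell out what the paper leaves implicit in \eqref{61}, namely that the $\|\H\|_{L^2_t(\hat{B}^1)}$ component of the $\mathfrak{B}^1_t$ norm is recovered from the hybrid dissipation integral through Lemma \ref{l2}, Young's inequality and an absorption argument; this is a clarification of the same argument rather than a different method.
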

\begin{proof}
In view of Corollary \ref{c1} and Lemma \ref{l6a}, one has
\begin{equation}\label{61}
\|(\u,\H)\|_{\mathfrak{B}_t^1}\lesssim X(0)+\|\mathcal{L}\|_{L^1(\hat{B}^0)}+\|\mathcal{M}\|_{L^1(\tilde{B}^{0,1})}+X(t)^2.
\end{equation}

\texttt{Estimate of $\|\mathcal{L}\|_{L^1(B^0)}$.} Easily, one deduces from Corollary \ref{c1}
\begin{equation}\label{62}
\|\H\cdot\nabla\H\|_{L^1(\hat{B}^0)}\lesssim \|\H\|_{L^2(\hat{B}^1)}\|\nabla\H\|_{L^2(\hat{B}^0)}\lesssim \|\H\|_{L^2(\hat{B}^1)}^2\lesssim X(t)^2.
\end{equation}

\texttt{Estimate of $\|\mathcal{M}\|_{L^1(\tilde{B}^{0,1})}$.} This estimate is quite straightforward since Lemma \ref{le} implies
\begin{equation}\label{64}
 \begin{split}
\|\H\cdot\nabla\u\|_{L^1(\tilde{B}^{0,1})}&\lesssim \|\H\|_{L^\infty(\tilde{B}^{0,1})}\|\nabla\u\|_{L^1(\hat{B}^1)}\\
&\lesssim \|\H\|_{L^\infty(\tilde{B}^{0,1})}\|\u\|_{L^1(\hat{B}^2)}\lesssim X(t)^2.
 \end{split}
\end{equation}

Summaring \eqref{61}-\eqref{64} together yields the desired estimate.
\end{proof}

\bigskip\bigskip

\section{Appendix: Proofs of Product Laws}

This appendix is devoted to the proof of Proposition \ref{p2} and Lemma \ref{cl}. The ideas to prove them are not new (see for example \cite{RD}), but the proof requires paradifferential calculus. 
The isotropic para-differential decomposition of Bony form in $\R^2$ can be stated as follows: let $f,g\in\mathcal{S}'(\R^2)$,
$$fg=T(f,g)+\bar{T}(f,g)+R(f,g)$$ with $\bar{T}(f,g)=T(g,f)$ and
$$T(f,g)\overset{def}=\sum_{j\in\mathbb{Z}}S_{j-1}f\D_jg,\quad R(f,g)\overset{def}=\sum_{j\in\mathbb{Z}}\D_jf\tilde{\D}_jg,\quad \tilde{\D}_jg\overset{def}=\sum_{l=j-1}^{j+1}\D_lg.$$
We use $T^1 $, $\bar{T}^1$ and $R^1$ to denote the isotropic para-differential decomposition of Bony form for $\R$ in the direction of $x_1$ respectively.

We first give the proof of Proposition \ref{p2}.
\begin{proof}[Proof of Proposition \ref{p2}]
By Bony's decomposition, one has
\begin{equation}\label{71}
 fg=\Big(TT^1+T\bar{T}^1+TR^1+\bar{T}T^1+\bar{T}\bar{T}^1+\bar{T}R^1+RT^1+R\bar{T}^1+RR^1\Big)(f,g).
\end{equation}
We focus on estimates for typical terms such as $TR^1$ and $RR^1$. Other terms can be estimated similarly.

\texttt{Estimate of $TR^1$.} Since
\begin{equation*}
 \begin{split}
 \|S_{q'-1}\D_{k'}^1f\|_{L^\infty}\lesssim \sum_{p\le q'-2}2^p\|\D_{p}\D_{k'}^1f\|_{L^2}\lesssim 2^{q'(1-s)}\|f\|_{\hat{B}^s}.
 \end{split}
\end{equation*}
Since $\mathcal{F}(\D_{q'} f\tilde{\D}_{q'}g)$ is contained in $\beta\{|\xi|\le 2^{q'}\}$ for some $0<\beta$, the inequality above entails
\begin{equation*}
 \begin{split}
\|\D_q\D_k^1(TR^1(f,g))\|_{L^2}&\lesssim \sum_{\substack{|q'-q|\le 3\\k'\ge k-2}}\|S_{q'-1}\D_{k'}^1f\|_{L^\infty}\|\D_{q'}\tilde{\D}_{k'}^1g\|_{L^2}\\
&\lesssim \|f\|_{\hat{B}^s}\sum_{\substack{|q'-q|\le 3\\k'\ge k-2}}2^{q'(1-s)}\|\D_{q'}\tilde{\D}_{k'}^1g\|_{L^2},
 \end{split}
\end{equation*}
and hence combining Holder and convolution inequalities for series gives
$$\|TR^1(f,g)\|_{\hat{B}^{s+t-1}}\lesssim \|f\|_{\hat{B}^s}\|g\|_{\hat{B}^t}.$$

\texttt{Estimate of $RR^1$.}
\begin{equation*}
 \begin{split}
 \|\D_q\D_k^1(RR^1(f,g))\|_{L^2}&\lesssim \sum_{\substack{q'\ge q-2\\k'\ge k-2}}\|\D_{q'}\D_{k'}^1f\|_{L^\infty}\|\tilde{\D}_{q'}\tilde{\D}_{k'}^1g\|_{L^2}\\
&\lesssim 2^q\sum_{\substack{q'\ge q-2\\k'\ge k-2}}\|\D_{q'}\D_{k'}^1f\|_{L^2}\|\tilde{\D}_{q'}\tilde{\D}_{k'}^1g\|_{L^2}.
 \end{split}
\end{equation*}
Thus we have
\begin{equation*}
 \begin{split}
2^{q(s+t-1)}\|\D_q\D_k^1(RR^1(f,g))\|_{L^2}\lesssim \sum_{\substack{q'\ge q-2\\k'\ge k-2}}2^{(q-q')(s+t)}\alpha_{q',k'}\|f\|_{\hat{B}^s}\|g\|_{\hat{B}^t},
 \end{split}
\end{equation*}
and the convolution inequalities for series over $q$ yields
$$\|RR^1(f,g)\|_{\hat{B}^{s+t-1}}\lesssim \|f\|_{\hat{B}^s}\|g\|_{\hat{B}^t}$$
since $s+t>0.$
\end{proof}

Next we turn to the proof of Lemma \ref{cl}. The key idea is to apply the integration by parts to convert the derivative on $f$ or $g$ to the derivative on $\u$.
\begin{proof}[Proof of Lemma \ref{cl}]
To prove \eqref{25a} and \eqref{25}, one can use \eqref{71} to decompose the product $e\cdot\nabla f$ into nine pieces; and then estimate term by term. For illustration, let us consider
\begin{equation}\label{72}
 \begin{split}
&\sum_{\substack{|q'-q|\le 3\\k'\ge k-2}}\Big(G(D)\D_q\D_k^1(S_{q'-1}\D_{k'}^1e_j\D_{q'}\tilde{\D}_{k'}^1\partial_{x_j} f)|G(D)\D_q\D_k^1 f\Big),
 \end{split}
\end{equation}
and the worst term above is 
\begin{equation}\label{73}
\Big(S_{q-1}\D_k^1e_jG(D)\D_{q}\D_{k}^1\partial_{x_j} f|G(D)\D_q\D_k^1 f\Big)
\end{equation}
since the difference between \eqref{72} and \eqref{73} can be estimate with the aid of the first order Taylor's formula. 

For \eqref{73}, integration by parts gives
\begin{equation}\label{75}
 \begin{split}
&\left|\Big(S_{q-1}\D_k^1e_jG(D)\D_{q}\D_{k}^1\partial_{x_j} f|G(D)\D_q\D_k^1 f\Big)\right|\\
&\quad=\left|\Big(S_{q-1}\D_k^1\Dv e G(D)\D_{q}\D_{k}^1 f|G(D)\D_q\D_k^1 f\Big)\right|\\
&\quad\lesssim \|S_{q-1}\D_k^1\Dv e\|_{L^\infty} \|G(D)\D_q\D_k^1 f\|_{L^2}^2\\
&\quad\lesssim
\begin{cases}
\alpha_{q,k}2^{qm+nk}\|e\|_{\hat{B}^2}\|f\|_{\hat{B}^0}\|G(D)\D_q\D_k^1g\|_{L^2}\\
\alpha_{q,k}2^{qm+nk}\min\{2^{-1}, 2^{k-2q}\}\|e\|_{\hat{B}^2}\|f\|_{\tilde{B}^{0,1}}\|G(D)\D_q\D_k^1g\|_{L^2}.
\end{cases}
 \end{split}
\end{equation}

To prove \eqref{26}, we again focus on the piece
\begin{equation*}
 \begin{split}
&\sum_{\substack{|q'-q|\le 3\\k'\ge k-2}}\Big[\Big(G(D)\D_q\D_k^1(S_{q'-1}\D_{k'}^1e_j\D_{q'}\tilde{\D}_{k'}^1\partial_{x_j} f)|\D_q\D_k^1 g\Big)\\
&\quad+\Big(\D_q\D_k^1(S_{q'-1}\D_{k'}^1e_j\D_{q'}\tilde{\D}_{k'}^1\partial_{x_j} g)|G(D)\D_q\D_k^1 f\Big)\Big],
 \end{split}
\end{equation*}
and the worst term above is
\begin{equation*}
 \begin{split}
&\Big(S_{q-1}\D_{k}^1e_jG(D)\D_{q}\D_{k}^1\partial_{x_j} f|\D_q\D_k^1 g\Big)\\
&\quad+\Big(S_{q-1}\D_{k}^1e_j\D_{q}\D_{k}^1\partial_{x_j} g|G(D)\D_q\D_k^1 f\Big),
 \end{split}
\end{equation*}
which equals, using integration by parts
\begin{equation}\label{76}
 \begin{split}
-\Big(S_{q-1}\D_{k}^1\Dv eG(D)\D_{q}\D_{k}^1 f|\D_q\D_k^1 g\Big).
 \end{split}
\end{equation}
Similarly as \eqref{75}, we can estimate \eqref{76} to obtain \eqref{26}.

\end{proof}


\bigskip\bigskip


\end{document}